\documentclass[11pt]{article} 
\usepackage{graphicx} 
\usepackage{amsmath,amscd,amssymb, hyperref, amsthm, mathtools,cases,amsopn, soul,cleveref,mathabx,relsize,tikz}
\usepackage{minted,enumerate}
\newtheorem{theorem}{Theorem}[subsection]
\newtheorem{corollary}[theorem]{Corollary}
\newtheorem{observation}[theorem]{Observation}

\newtheorem{proposition}[theorem]{Proposition}

\theoremstyle{definition}
\newtheorem{definition}[theorem]{Definition}

\newtheorem{example}[theorem]{Example}

\definecolor{cardinal}{rgb}{0.77, 0.12, 0.23}

\newcommand{\Z}{\operatorname{Z}}
\DeclareMathOperator{\dist}{dist}

\DeclareMathOperator{\ab}{(\alpha,\beta)}
\DeclareMathOperator{\pt}{\mathrm{pt}}

\newcommand*{\ora}[1]{\overrightarrow{#1}}
\DeclareMathOperator{\Zab}{Z_{\alpha,\beta}}

\tikzstyle{vertex}=[circle, draw, inner sep=1pt, minimum size=8pt]

\def\GraphSix#1"{\begingroup(\itshape graph6:\nolinebreak[3]\ \verb"\aftergroup\endgroup\aftergroup)}

\usepackage{tikz}
\tikzstyle{vertex}=[circle, draw, inner sep=0pt, minimum size=10pt]
\tikzstyle{bvertex}=[circle, blue, fill, draw=black, inner sep=0pt, minimum size=10pt]
\tikzstyle{Bvertex}=[circle, black, fill, draw, inner sep=0pt, minimum size=10pt]
\newcommand{\vertex}{\node[vertex]}
\newcommand{\bvertex}{\node[bvertex]}

\usepackage{authblk}

\crefname{figure}{figure}{}
\Crefname{figure}{Figure}{}
\title{Transmission Zero Forcing}
\author[1]{Adam H.~Berliner}
\author[2]{Chassidy Bozeman} 
\author[3]{Karen L. Collins} 
\author[4]{Mary Flagg}
\author[5]{Veronika Furst} 
\author[6]{Mark Hunnell}
\affil[1]{Department of Mathematics, Statistics, and Computer Science, St.~Olaf College, Northfield, MN}
\affil[2]{ Department of Mathematics and Statistics, Mount Holyoke College, South Hadley, MA}
\affil[3]{Department of Mathematics and Computer Science, Wesleyan University, Middletown, CT}
\affil[4]{Department of Mathematics and Computer Science, University of St.~Thomas, Houston, TX}
\affil[5]{Department of Mathematics, Fort Lewis College, Durango, CO}
\affil[6]{Department of Mathematics, Winston-Salem State University, Winston-Salem, NC}

\begin{document}
\maketitle
\noindent
\textbf{Keywords:} Transmission forcing, zero forcing, propagation time \\
\textbf{MSC:} 05C69 05C57 05C38

\begin{abstract}
    We initiate the study of transmission zero forcing, a variant of the well-studied zero forcing graph parameter. In this variant, a subset of vertices is assigned an initial unit weight, and these vertices can increase the weight of a neighbor subject to the zero forcing color change rule at a rate determined by the transmission proportion. A vertex is considered filled when its weight exceeds the transmission threshold, at which point the process can continue. The transmission zero forcing number of a graph is the minimum cardinality of the initial set that results in all vertices exceeding the transmission threshold. This iterative graph coloring process is a generalization of zero forcing that allows for a vertex to be forced by multiple neighbors. We develop tools for studying this graph parameter, determine its value on some common classes of graphs, and investigate its behavior under various graph operations.
\end{abstract}

\section{Introduction}

In many interesting networks modeled by graphs, such as social media, the electric grid, and disease propagation, nodes can exert influence over one another subject to certain constraints.  Understanding the dynamics of the evolution of these networks is a difficult problem, hence various attempts to understand simpler models first.  Several natural graphical models with vertex labels changing over time have been studied, including zero forcing \cite{Barioli2010ZeroFP} and its related variants, domination \cite{domination1998} and its related variants, and cellular graph automata \cite{WU1979305}.  In this article, we introduce a novel variant of the defining mechanics of the zero forcing paradigm and study its implications in contrast to the current zero forcing literature.  We call this new variant transmission zero forcing, or transmission forcing for short.

We delay formal definitions until \Cref{prelim} but informally motivate our problem in this section.    
Zero forcing and its variants have generated significant interest because of a wide variety of applications, including linear algebra \cite{Barioli2010ZeroFP}, quantum control \cite{quantum}, and the spread of disease and information \cite{DREYER2009}. In this paradigm, each vertex is initially designated as filled or unfilled, and the graph evolves according to a color change rule specific to the variant.  Originally, the zero forcing literature used color assignments to designate filled and unfilled vertices, hence the use of the term color change rule.  The parameter of interest is the minimum cardinality of an initially filled set of vertices that results in all vertices filled after repeated application of the color change rule.  Transmission zero forcing refines this model by attaching weights to each vertex (zero or one initially) and two additional parameters which take values in the interval $[0,1]$.  The first parameter, called the transmission proportion $\alpha$, determines the proportion of weight a filled vertex may assign to an unfilled vertex, subject to the color change rule.  The second parameter, which we call the transmission threshold proportion $\beta$, specifies the minimum weight a vertex must have to be filled.  Thus standard zero forcing corresponds to transmission zero forcing when $\alpha=1$ and $\beta$ is arbitrary. Moreover, in standard zero forcing each vertex can receive weight at most once and transmit weight at most once.  Our variant relaxes the former restriction while maintaining the latter. 

We now describe some of the graph evolution variants well-studied in the literature to clarify the gap which we address.
Zero forcing, as developed in the literature, designates a single filled vertex to influence an unfilled vertex, and each filled vertex can influence at most one unfilled vertex as the graph evolves.  Several natural variations, including PSD forcing \cite{Barioli2010ZeroFP} and $k$-forcing \cite{AMOS20151}, allow a filled vertex to influence more than one unfilled vertex.  PSD forcing was introduced in connection to the problem of determining the maximum nullity achieved by a positive semidefinite matrix that obeys the combinatorial pattern defined by a graph. If $B \subseteq V(G)$, where $V(G)$ denotes the set of vertices of a graph $G$, is a set of filled vertices, then for PSD forcing $v\in B$ can fill a vertex in each component of the induced subgraph of $G$ obtained by deleting $B$, subject to the PSD color change rule.  On the other hand, $k$-forcing was motivated by graph-theoretic questions and allows each vertex in $B$ to fill up to $k$ of its neighbors, subject to the $k$-forcing color change rule.  Both variants, however, designate a single filled vertex to perform the force that changes an unfilled vertex to filled.  Transmission forcing inverts these restrictions, allowing an unfilled vertex to receive contributions to its weight from all of its (filled) neighbors, but a filled vertex may only contribute to the weight of at one most one of its (unfilled) neighbors.  

Transmission forcing shares some similarities to $(t,r)$ broadcast domination, first introduced in \cite{BLESSING2015}.  In this domination variant, one seeks to minimize the cardinality of a set $B \subseteq V(G)$ such that each $v\in B$ distributes a weight of $\max\{0,t- d(u,v)\}$ to the vertices $u \in V(G)$, where $d(u,v)$ denotes the distance between $u$ and $v$ in $G$.  Such a set is $(t,r)$ dominating if each vertex of $G$ has a weight of at least $r$.  Thus our transmission threshold $\beta$ plays the role of $r$ in $(t,r)$ domination, and the decay over distance in transmission zero forcing is geometric rather than linear as it is in $(t,r)$ domination. 

Before concluding this section with a description of the organization of the article, we describe a model in which transmission forcing promotes an efficient resource allocation strategy.  We model a transportation system of an urban network by a graph, with one vertex in the graph for each city, and an edge between two cities if they are adjacent stops on a trucking route. Our goal is to identify the smallest possible number of warehouses, each with its own truck, so that the trucks may be sent out and will supply all of the cities with necessary materials. Each truck driver delivers a proportion $\alpha$ of their load, and each warehouse stockpiles the deliveries until it reaches an amount $B$ before sending out a truck. When a truck $T$ originates from its warehouse $W$, it may only depart if all but one of the neighboring cities to $W$ have been supplied. In that case, $T$ departs for the unique adjacent city $C$ that has not yet been supplied, and delivers a fraction $\alpha$ of its load to the warehouse in $C$, and keeps $(1-\alpha)$. Subsequently, whenever a truck arrives at a city, it adds $\alpha$ of its load to the city's warehouse, and when the warehouse is filled with at least $B$ supplies, the city loads a truck with this amount, and sends the truck out once the departure condition is met. The process continues until every city has been supplied. We assume that every initial warehouse has $A$ amount. We let $\beta=B/A$. The model promotes efficiency in its delivery method because when a truck departs for a city, it is the only destination that has not yet been supplied. 

\Cref{prelim} provides the definitions and notation for the graph theoretic notions needed in this article, before developing zero forcing and transmission zero forcing. \Cref{sec:transmission} establishes some basic properties of the transmission zero forcing number, including its extreme values, while \Cref{sec:families} establishes values of the transmission forcing number for several graph families.  Finally, \Cref{sec:vert_edge_del} describes the effect of graph minor operations on the transmission forcing number.

\section{Preliminaries}\label{prelim}

A (finite, simple) graph is denoted by $G = (V (G), E(G))$ where $V (G)$ is the set
of vertices of $G$ and $E(G)$ is the edge set of $G$, where an edge is a two-element set of vertices
and the edge $\{u, v\}$ may also be denoted by $uv$ (or $vu$).  
For $S \subseteq V (G)$, the subgraph of $G$ induced by $S$, denoted by $G[S]$, is the graph obtained
from $G$ by removing all vertices not in $S$ (and their incident edges).  We denote the path graph on $n$ vertices by $P_n$, the cycle graph on $n$ vertices by $C_n$, the complete graph on $n$ vertices by $K_n$, and the complete bipartite graph by $K_{p,q}$, where $p+q=n$.  Two vertices $u$ and $v$ are called neighbors, or adjacent, if $uv \in E(G)$, and the open neighborhood of $v$, denoted $N(v)$, is the set of vertices adjacent to $v$.  The closed neighborhood of $v$ is $N[v]=N(v) \cup \{v\}$. The degree of a vertex in $G$, denoted $\deg_G(v)$, is the number of neighbors of $v$ in $G$, i.e., $|N(v)|$. We denote by $\Delta(G)$ (respectively $\delta(G)$) the maximum (resp. minimum) degree of a vertex in $G$. A vertex of degree-1 is called a leaf.  Given $u,v \in V(G) $, the distance from $u$ to $v$, denoted $\dist_G(u,v)$, is the length of the shortest path from $u$ to $v$.  Similarly,  given a directed graph $\ora{G}$ and two vertices $u,v$ of $G$, the distance from $u$ to $v$ in $\ora{G}$, denoted $\ora{\dist}_{\scriptsize \ora{G}}(u,v)$, is the length of the shortest (directed) path from $u$ to $v$.  The diameter of $G$, denoted $\mathrm{diam}(G)$, is the maximum distance among every possible pair of vertices in $G$.  If $v\in V(G)$ and $e = uv \in E(G)$, then $G-v$ is the graph obtained from $G$ by deleting the vertex $v$, $G-e$ is the graph obtained from $G$ by deleting the edge $e$, and $G/e$ is the contraction of $G$, obtained from $G$ by identifying $u$ and $v$ and suppressing any loops or multiple edges that result from doing so.

\subsection{Zero Forcing}\label{subsec:zf}

Let $G=(V(G),E(G))$ be a graph and assume that each vertex is designated as either filled or unfilled. Let $B_0$ be the set of initially filled vertices.  The zero forcing process evolves according to the (standard) color change rule: If an unfilled vertex $u$ is the unique unfilled neighbor of $v \in B_0$, then $u$ becomes filled.  We also say that $v$ forces $u$ and denote this by $v\rightarrow u$.  After all possible forces are made, we iterate the color change rule using the new set of filled vertices until no more forces are possible.  The sets of filled and unfilled vertices determine a (unique) final coloring of $G$ from $B_0$, and $B_0$ is a called a {\em zero forcing set} if all the vertices of $G$ are filled in the final coloring.  The {\em zero forcing number} of $G$ is the minimum cardinality of a zero forcing set and is denoted by $\Z(G)$.

The {\em propagation time} of a graph $G$ is the minimum number of iterations of the color change rule necessary for a minimum zero forcing set to achieve a final coloring.  It is often convenient to have notation for the sets of filled and unfilled vertices at each iteration.  Let $G$ be a graph and $B \subseteq V(G)$ a set of initially filled vertices. For $t\geq 1$, define:
\begin{align*}
         B_0\;\;  &= B_{(0)} = B, \\
        B_{(t)} &= \{u \in V(G)\setminus B_{t-1} \; | \;  \text{there exists } v\in B_{t-1} \text{ such that }N(v) \cap (V(G)\setminus B_{t-1}) = \{u\} \},  \text{ and } \\
        B_t \;\; &= B_{t-1} \cup B_{(t)}.
    \end{align*}
    We refer to $t$ in $B_t$ as the {\em time} or {\em time step}.
The propagation time of a set $B \subseteq V(G)$ in $G$, denoted $\pt(G,B)$, is then the smallest value of $t$ such that $B_t = V(G)$, and the propagation time of $G$, denoted $\pt(G)$, is \[ \pt(G) = \min \{ \pt(G,B) \ | \ |B| = \Z(G)\}. \]    

\subsection{Transmission Zero Forcing}\label{subsec:tzf}

Let $G$ be a graph and fix two constants $\alpha, \beta \in [0,1]$.  We call $\alpha$ the {\em transmission proportion} and $\beta$ the {\em transmission threshold}.  Each vertex $v\in V(G)$ is assigned an initial weight, $w_0(v)$, in the following way. If $B \subseteq V(G)$, then $w_0(v) =1$ for all $v\in B$, and $w_0(u) =0$ for all $u \in V(G)\setminus B$. We denote the weight of a vertex after $t$ iterations of the transmission color change rule by $w_t(v)$. The weights of the vertices are then updated for $t\geq 1$ according to the transmission color change rule: A filled vertex $v$ can force an unfilled vertex $u$ if $u$ is the only unfilled neighbor of $v$, and we denote this force by $v\rightarrow u$. A vertex $v$ is filled at time $t$ if $w_t(v) \geq\beta$, and unfilled otherwise.  As with zero forcing, it is convenient to have notation to keep track of the evolution of the filled and unfilled vertices of the graph.

\begin{definition}
    Given a graph $G$ and a set $B \subseteq V(G)$, for $t \geq 1$ let:
    \begin{align*}
        B_0^{\ab} &= B_{(0)}^{\ab} = B \\
        B_{(t)}^{\ab} &= \{v \in V(G)\setminus B_{t-1}^{\ab} \ | \  w_t(v) \geq \beta  \}  \text{ and } \\
        B_t^{\ab} &= B_{t-1}^{\ab} \cup B_{(t)}^{\ab}.
    \end{align*}
    We refer to $t$ in the above as the {\em time} or {\em time step}.  The process terminates when $B_{(t+1)}^{\ab} = \emptyset$.  In this case, we define $w(v) = w_t(v)$ for each $v\in V(G)$.
\end{definition}

Then $B_t^{\ab}$ is the set of filled vertices at time $t$ and $V(G)\setminus B_t^{\ab}$
is the set of unfilled vertices at time $t$.  Let $\mathcal{V}_{t,u} = \{ v \in B_{t-1}^{\ab} \ | \ N(v) \cap (V(G)\setminus B_{t-1}^{\ab)})
=\{u\} \}$, or equivalently, $\mathcal{V}_{t,u} = \{ v \in B_{t-1}^{\ab} \ | \ v\rightarrow u \text{ is a valid force at time }t \}$. For each $u \in V(G)$, \begin{equation}\label{weightdef}
w_t(u) = \begin{cases}
    w_{t-1}(u) + \displaystyle\sum_{v\in \mathcal{V}_{t,u}} \alpha \cdot w_{t-1}(v), \quad u \in V(G)\setminus B_{t-1}^{\ab} \\ 
    w_{t-1}(u), \quad u \in B_{t-1}^{\ab}.
\end{cases}\end{equation}

We can now present the definition of the graph parameter studied in this paper.
\begin{definition}\label{def:Zab}
Given a graph $G$ and a set $B \subseteq V(G)$, we say that $B$ is a {\em $(\alpha,\beta)$-transmission zero forcing set} of $G$ if there exists $t\geq 0$ such that $B_t^{\ab} = V(G)$, i.e., every vertex of $G$ is eventually filled.  The {\em $(\alpha,\beta)$-transmission zero forcing number} of $G$, denoted $\Zab(G)$, is the minimum cardinality of an $(\alpha,\beta)$-transmission zero forcing set of $G$.
\end{definition}

We will often refer to the set (respectively, number) in Definition \ref{def:Zab} as transmission forcing set (respectively, number) for $(\alpha, \beta)$ and will omit the parameters when clear from context. 

\section{Transmission Zero Forcing Properties} \label{sec:transmission}

\Cref{def:Zab}  requires (when possible) multiple filled vertices to force a single unfilled vertex.  If this possibility were excluded (as in zero forcing), the exploration of $\Zab(G)$ would reduce to the study of $\pt(G,B)$ for various zero forcing sets of $G$, which has been well-studied (see, for instance, \cite{article}).  It is precisely this additivity of weights that provides transmission zero forcing with its richness.

\subsection{Basic Observations and Examples}

\begin{observation} \rm \label{basic} Let $G$ be a graph.
\begin{enumerate}
    \item \label{basic1} Any transmission zero forcing set must contain a zero forcing set.
    \item \label{basic2} $\Zab(G)\geq \Z(G)$.
    \item \label{basic3} If $\alpha =1$, then $\Zab(G) = \Z(G)$.
    \item \label{basic4} If $G$ is the disjoint union of $r$ connected graphs $\{G_i\}_{i=1}^r$, 
    then $\Zab(G) = \sum_{i=1}^r \Zab(G_i)$.
\end{enumerate}
\end{observation}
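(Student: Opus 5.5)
The plan is to compare the transmission process from a set $B$ with the zero forcing process from the same $B$, step by step, and then read off the four items.

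\textbf{Item (1).} Let $B$ be a transmission forcing set, and let $B'$ be the final filled set of ordinary zero forcing started from $B$. I will show by induction on $t$ that $B_t^{\ab}\subseteq B'$.

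The base case is immediate, since $B_0^{\ab}=B\subseteq B'$.

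For the inductive step, let $u\in B_{(t)}^{\ab}$. Then $w_t(u)\geq\beta$ and $u$ was unfilled at time $t-1$.

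\emph{Case $\beta>0$.} Here $u$ must have received positive weight at some step $s\le t$. That requires a vertex $v\in B_{s-1}^{\ab}$ such that $u$ was the unique unfilled neighbor of $v$ at time $s-1$. By induction $v\in B'$. Every other neighbor of $v$ lies in $B_{s-1}^{\ab}\subseteq B'$. Since $B'$ is closed under the zero forcing color change rule, $v$ forces $u$ in zero forcing, so $u\in B'$.

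\emph{Case $\beta=0$.} Every vertex is filled at time $0$, since the rule says $v$ is filled when $w_t(v)\ge\beta$. I will treat this degenerate case separately. Either the convention is that $B$ is a zero forcing set vacuously only when $B=V(G)$, or the statement implicitly assumes $\beta>0$. I expect this edge case to be the main nuisance, and I will note the tacit assumption $\beta>0$ where it is needed.

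Hence $V(G)=B_t^{\ab}\subseteq B'$, so $B$ itself is a zero forcing set. Items (1) and (2) follow at once, the latter by taking $B$ of minimum size.

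\textbf{Item (3).} With $\alpha=1$, the key claim is that every filled vertex has weight at least $1$. Induct on $t$. Initially filled vertices have weight $1$. A newly filled vertex $u$ that was forced by some $v$ gains $w(v)\geq 1\geq\beta$, so $u$ becomes filled at the very step it is forced.

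Therefore the filled sets coincide with the zero forcing sets $B_t$ at every time $t$. Indeed, any vertex forced in zero forcing at time $t$ also receives transmission weight $\ge 1$ at time $t$, and conversely by item (1). So a zero forcing set is a transmission forcing set, which gives $\Zab(G)\le\Z(G)$. Combined with item (2), this yields equality.

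\textbf{Item (4).} The color change rule and the weight update \eqref{weightdef} depend only on neighborhoods. So the process on $G$ restricted to each component $G_i$ is exactly the process on $G_i$ started from $B\cap V(G_i)$. Consequently $B$ forces $G$ if and only if each $B\cap V(G_i)$ forces $G_i$. Minimizing over $B$ gives the sum formula.
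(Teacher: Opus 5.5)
Your proof is correct and is essentially the routine verification the paper leaves implicit: the paper states this as an observation without proof, and your argument fills it in. Every weight transfer is a legal zero forcing move, so the filled sets stay inside the zero forcing closure of $B$. With $\alpha=1$, every forced vertex immediately reaches weight $\ge 1\ge\beta$, so the two processes coincide, and the update rule is local, so it decouples over components.

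The one thing to tighten is your $\beta=0$ remark. There every vertex satisfies $w_1(v)\ge 0=\beta$, so even $B=\emptyset$ is a transmission forcing set and items (1)--(3) are simply false. Thus $\beta>0$ is a genuine tacit hypothesis, not a matter of convention, and you should drop the alternative ``convention'' you offer.
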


\begin{example} \label{ex:ygraph}
    The graph $G$ shown in Figure \ref{fig:ygraph} has $\Z(G) = 2$, and up to isomorphism, the three illustrations show the only three minimum zero forcing sets $B_1$, $B_2$, and $B_3$, hence $\pt(G)=2$. If $\alpha \geq 0.5$ and $\alpha^2 \geq \beta$, all three sets are transmission forcing sets for $(\alpha, \beta)$.  On the other hand, if $\alpha < \beta$ and $2\alpha^3 \geq \beta$ (for example, $\alpha = 0.8$ and $\beta = 0.9$), then the additivity in the first forcing step is necessary, and only $B_3$ is an $(\alpha, \beta)$-transmission forcing set.  
\end{example}

\begin{figure}[ht]
    \centering
\begin{tikzpicture}[scale=.95]
\vertex (1) at (-1, -1) [label= above: $\alpha^2$] {};
\bvertex (2) at (-1, 1) [label= above: $1$] {};    
\vertex (3) at (0,0) [label= above: $\alpha$] {};
\vertex (4) at (1,0) [label= above: $\alpha$]  {};
\bvertex (5) at (2, 0) [label= above: $1$] {};

\draw[<-, thick] (1) to (3);
\draw[->, thick] (2) to (3);
\draw[-, thick] (3) to (4);
\draw[<-, thick] (4) to (5);
\end{tikzpicture}
\hspace{.5in}  
\begin{tikzpicture}[scale=.95]
\vertex (1) at (-1, -1) [label= above: $\alpha^2$] {};
\bvertex (2) at (-1, 1) [label= above: $1$] {};    
\vertex (3) at (0,0) [label= above: $\alpha$] {};
\bvertex (4) at (1,0) [label= above: $1$]  {};
\vertex (5) at (2, 0) [label= above: $\alpha$] {};

\draw[<-, thick] (1) to (3);
\draw[->, thick] (2) to (3);
\draw[-, thick] (3) to (4);
\draw[->, thick] (4) to (5);
\end{tikzpicture}
 \hspace{0.5in}
\begin{tikzpicture}[scale=.95]
\bvertex (1) at (-1, -1) [label= above: $1$] {};
\bvertex (2) at (-1, 1) [label= above: $1$] {};    
\vertex (3) at (0,0) [label= above: $2\alpha$] {};
\vertex (4) at (1,0) [label= above: $2\alpha^2$]  {};
\vertex (5) at (2, 0) [label= above: $2\alpha^3$] {};

\draw[->, thick] (1) to (3);
\draw[->, thick] (2) to (3);
\draw[->, thick] (3) to (4);
\draw[->, thick] (4) to (5);
\end{tikzpicture}

    \caption{The graph $G$ in Example \ref{ex:ygraph}, with the initially filled sets $B_1$ (left), $B_2$ (middle), and $B_3$ (right) shown. }
    \label{fig:ygraph}
\end{figure}

 \begin{observation} \label{obs:unique} \rm
Although it is known that there exists no vertex that must be included in all minimum zero forcing sets of a connected graph \cite{Barioli2010ZeroFP}, the set $B_3$ is the unique minimum  $(0.8, 0.9)$-transmission forcing set of the graph $G$ in \Cref{ex:ygraph}.  In \Cref{ex:conjpart2}, we will see one vertex that must be contained in any minimum transmission forcing set for the given $(\alpha, \beta)$.    

Furthermore, it is clear that if $B$ is a zero forcing set of a graph $G$, then any superset of $B$ is also a zero forcing set.  This property does not carry over to transmission zero forcing.  The set $B_3$ is a $(0.8, 0.9)$-transmission forcing set of the graph $G$ in \Cref{ex:ygraph}; however the union of $B_3$ and the degree-3 vertex is no longer a $(0.8, 0.9)$-transmission forcing set.  
\end{observation}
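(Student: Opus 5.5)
The plan is a direct finite verification on the five-vertex graph $G$ of \Cref{ex:ygraph}. Label the two leaves adjacent to the degree-3 vertex $a$ and $b$, the degree-3 vertex $c$, and the remaining path $c$--$d$--$e$. I will read the transmission color change rule with the convention stated in the introduction: each filled vertex transmits weight at most once, namely at the single time step at which it first has a unique unfilled neighbor. I will take $\alpha=0.8$ and $\beta=0.9$ throughout.

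\textbf{Step 1: uniqueness of $B_3$.} By \Cref{basic}, $\Zab(G)\ge \Z(G)=2$. Also by \Cref{basic}, any transmission forcing set of size $2$ must itself be a minimum zero forcing set.
\begin{enumerate}
\item First I list all $2$-element zero forcing sets of $G$. A quick case check over the $\binom52=10$ pairs should show that these are exactly $\{a,b\}$, $\{a,e\}$, $\{b,e\}$, $\{a,d\}$ and $\{b,d\}$. These are $B_3$ together with the copies of $B_1$ and $B_2$ under the automorphism swapping $a$ and $b$. Pairs containing $c$, or of the form $\{d,e\}$, fail immediately because no vertex has a unique unfilled neighbor that lets forcing continue to the end.
\item For $B_1=\{b,e\}$, the only forces at time $1$ are $b\to c$ and $e\to d$. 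Each gives weight $0.8<0.9$, so $B_{(1)}^{\ab}=\emptyset$ and the process terminates.
\item For $B_2=\{b,d\}$, the forces at time $1$ are $b\to c$ and $d\to e$, again each of weight $0.8<\beta$, so the process stops.
\item By symmetry the same holds for the copies of $B_1$ and $B_2$ with $a$ in place of $b$.
\item For $B_3=\{a,b\}$, both leaves force $c$ simultaneously. This gives $w(c)=1.6$, then $w(d)=1.28$, then $w(e)=1.024\ge 0.9$, exactly as in the right panel of Figure~\ref{fig:ygraph}.
\end{enumerate}
Hence $B_3$ is the unique minimum $(0.8,0.9)$-transmission forcing set. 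This recovers the claim of \Cref{ex:ygraph} that the inequalities $\alpha<\beta\le 2\alpha^3$ force this outcome.

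\textbf{Step 2: failure of the superset $B_3\cup\{c\}$.} With $a$, $b$ and $c$ all initially filled at weight $1$, the vertices $a$ and $b$ have no unfilled neighbors, so the additive contribution to $c$ is lost. The only force available is $c\to d$, giving $w(d)=0.8<0.9$. After that no filled vertex has a unique unfilled neighbor it has not already used, so the process terminates with $d$ and $e$ unfilled.

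\textbf{Main obstacle.} The only delicate point is the interpretation of the weight update~\eqref{weightdef}. Read literally, $b$ would keep adding $0.8$ to $c$ at every step and $B_1$, $B_2$ would eventually succeed. I would therefore state explicitly the convention that each vertex forces at most once, which is what the introduction asserts and what the example presupposes, before carrying out the computations above.
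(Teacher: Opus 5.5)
Your verification is correct and matches the paper's reasoning. Observation~\ref{basic} reduces uniqueness to checking the five minimum zero forcing sets; only $B_3$ survives since $\alpha<\beta\le 2\alpha^3$; and adding the degree-3 vertex removes the additive first step, so $d$ receives only $0.8<0.9$. There is one small slip: for $B_2=\{b,d\}$, $d\to e$ is not a valid force at time $1$ because $c$ and $e$ are both unfilled neighbors of $d$, so only $b\to c$ happens, which gives the same conclusion. Also, the paper's rule that the process terminates once $B^{(\alpha,\beta)}_{(t+1)}=\emptyset$ already settles your literal-reading worry here, since in every failing case nothing is filled at time $1$.
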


\begin{definition} \label{defn:proptime}
   Let $G$ be a graph and $B$ an $(\alpha,\beta)$-transmission zero forcing set of $G$.  The {\em $\ab$-propagation time} of $B$ in $G$, denoted $\pt^{\ab}(G,B)$, is the minimum value of $t$ such that $B_t^{\ab} = V(G)$. The {\em $\ab$-propagation time} of $G$, denoted $\pt^{\ab}(G)$, is  $\min \{ \pt^{\ab}(G,B) \ | \ |B| = \Zab(G)\}$.
\end{definition}

In Example~\ref{ex:ygraph}, we have $\pt^{(\frac{1}{2},\frac{1}{4})}(G, B_1)=2=\pt^{(\frac{1}{2},\frac{1}{4})}(G,B_2)$, and $\pt^{(\frac{1}{2},\frac{1}{4})}(G, B_3)=3$, hence $\pt^{(\frac{1}{2}, \frac{1}{4})}(G)=2$, while only $B_3$ transmission forces when $\alpha=.8$ and $\beta=.9$, hence $\pt^{(.8,.9)}(G)=3$. 

An important feature of zero forcing is a vertex needing to wait for all but one of its neighbors to be filled before it can perform a force.  In transmission zero forcing, this waiting may be exacerbated by a vertex also needing additional weight.  The next example illustrates this difference.

\begin{example} \label{ex:doubleygraph}
    The graph $G$ shown in Figure \ref{fig:doubleygraph} has $\Z(G) = 3$, and a minimum zero forcing set $B$ appears in the figure as the filled vertices.  If $\alpha^2 > \beta$ (for example, $\alpha = 0.9$ and $\beta = 0.8$), then the transmission forcing process proceeds in the same manner as the zero forcing process, with propagation time 3; vertex $v$ in the figure can perform a force as soon as vertex $u$ is filled.  However, if $\alpha < \beta$ and $2\alpha^2 \geq \beta$ and $\alpha^2 + 2\alpha^4 \geq \beta$ (for example, $\alpha = 0.7$ and $\beta = 0.8$), then vertex $v$ must wait not only for vertex $u$ to be filled but also for $u$ to force $v$, and the final filling is achieved in 4 time steps.
\end{example}

\begin{figure}[ht]
    \centering
\begin{tikzpicture}[scale=.95]
\bvertex (1) at (-1, -1) [label= above: $1$] {};
\bvertex (2) at (-1, 1) [label= above: $1$] {};    
\vertex (3) at (0,0) [label= above: $2\alpha$] {};
\vertex (4) at (1,0) [label= above: $2\alpha^2$]  {$u$};
\vertex (5) at (2, 0) [label= above: $\alpha$] {$v$};
\vertex (6) at (3, -1) [label= above: $\alpha^2$] {};
\bvertex (7) at (3, 1) [label= above: $1$] {}; 

\draw[->, thick] (1) to (3);
\draw[->, thick] (2) to (3);
\draw[->, thick] (3) to (4);
\draw[-, thick] (4) to (5);
\draw[->, thick] (7) to (5);
\draw[->, thick] (5) to (6);
\end{tikzpicture}
\hspace{.5in}   
\begin{tikzpicture}[scale=.95]
\bvertex (1) at (-1, -1) [label= above: $1$] {};
\bvertex (2) at (-1, 1) [label= above: $1$] {};    
\vertex (3) at (0,0) [label= above: $2\alpha$] {};
\vertex (4) at (1,0) [label= above: $2\alpha^2$]  {$u$};
\vertex (5) at (2, 0) [label= right: $\alpha+2\alpha^3$] {$v$};
\vertex (6) at (3, -1) [label= right: $\alpha^2+2\alpha^4$] {};
\bvertex (7) at (3, 1) [label= above: $1$] {}; 

\draw[->, thick] (1) to (3);
\draw[->, thick] (2) to (3);
\draw[->, thick] (3) to (4);
\draw[->, thick] (4) to (5);
\draw[->, thick] (7) to (5);
\draw[->, thick] (5) to (6);
\end{tikzpicture}

    \caption{The graph $G$ in Example \ref{ex:doubleygraph}, with the initially filled set $B$ shown and final filling achieved in 3 time steps (left) or 4 time steps (right).}
    \label{fig:doubleygraph}
\end{figure}

\begin{definition}
    Given a graph $G$ and set $B\subseteq V(G)$, the {\em set of forces}, denoted $\mathcal{F}(B)$, is the set of all forces $v\rightarrow u$ that produces the final filling according to the transmission color change rule.
\end{definition}

\begin{observation} \rm 
    Unlike in zero forcing, $\mathcal{F}(B)$ is unique for each transmission forcing set $B$.  
   
\end{observation}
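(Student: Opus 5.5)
The statement says that $\mathcal{F}(B)$ is uniquely determined by $B$. I will prove this by showing that the transmission process from $B$ is completely deterministic. The states $(B_t^{\ab}, w_t)$ are determined by induction on $t$, and the set of forces performed at each step is a function of the current state. Since the set of forces is exactly what these steps produce, $\mathcal{F}(B)$ is then a function of $B$ alone.

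\textbf{Base case.} At $t=0$ we have $B_0^{\ab}=B$, and $w_0$ is the indicator function of $B$. Both depend only on $B$.

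\textbf{Inductive step.} Assume $B_{t-1}^{\ab}$ and $w_{t-1}$ are determined. For each unfilled $u$, the set
\[
\mathcal{V}_{t,u}=\{v\in B_{t-1}^{\ab} : N(v)\cap (V(G)\setminus B_{t-1}^{\ab})=\{u\}\}
\]
is determined by $G$ and $B_{t-1}^{\ab}$. Equation~\eqref{weightdef} then gives $w_t$ explicitly from $w_{t-1}$ and these sets. From $w_t$ we get $B_{(t)}^{\ab}$ and $B_t^{\ab}$.

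\textbf{The forces at step $t$.} These are exactly $\{v\to u : v\in\mathcal{V}_{t,u}\}$, so they too are determined by the state at time $t-1$.

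\textbf{Why there is no choice.} In standard zero forcing, $\mathcal{F}(B)$ is not unique because an unfilled $u$ can have several neighbors each able to force it, and one records only one of them as the forcer. Here no such choice is made: every valid forcer contributes its weight through the sum in \eqref{weightdef}. I will also check that each filled vertex $v$ forces at most once. At a given time $v$ has at most one target, namely its unique unfilled neighbor. Once that target $u$ receives weight, $v$ may remain eligible at later steps while $u$ is still unfilled. I will read the definition as the process itself does: $\mathcal{F}(B)$ is the set of pairs $v\to u$ that actually transfer weight. As a set of pairs this is still determined, even if the same pair occurs at several times.

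\textbf{Main obstacle.} The mathematics is easy. The hardest step is pinning down what "set of forces" means when a pair $v\to u$ may recur, and noting that uniqueness holds under either reading, as a set of pairs or as a time-indexed multiset. Either way it is the union over $t\le \pt^{\ab}(G,B)$ of sets that are each determined by the inductively unique state, so it is unique. For contrast, I will note that in zero forcing the nonuniqueness comes only from selecting one forcer among several, and additivity removes exactly that selection.
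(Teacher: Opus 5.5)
Your argument is correct and matches the paper's reasoning; the paper records this as an observation without proof, and the implicit justification is the one you give: every eligible forcer acts simultaneously and contributes additively, so $B_t^{(\alpha,\beta)}$, $w_t$ and the forces at each step are determined inductively by $B$, and no single forcer is ever selected as in zero forcing. One caution: the paper intends each vertex to transmit weight only once (see the introduction and the weight $\alpha+2\alpha^3$ of $v$ in \Cref{ex:doubleygraph}), not repeatedly as a literal reading of \eqref{weightdef} allows; your induction still goes through if the state also records which vertices have already transmitted.
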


\subsection{Size of Transmission Zero Forcing Sets} \label{sec:size} 

We investigate the extreme values of $\Zab(G)$, starting with the lower bound.  We begin by noting that results for zero forcing can sometimes be directly leveraged in the study of transmission zero forcing.

\begin{proposition}
    If $G$ is a graph on $n$ vertices and $\Zab(G)=1$, then $G=P_n$.
\end{proposition}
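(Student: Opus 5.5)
The plan is to reduce to the classical zero forcing fact that $\Z(G)=1$ if and only if $G$ is a path. By Observation~\ref{basic}(\ref{basic2}), $\Z(G)\le \Zab(G)=1$. Since $\Zab(G)=1$, the graph has at least one vertex, so some set is nonempty and $\Z(G)\ge 1$ (a forcing process started from the empty set never fills anything). Hence $\Z(G)=1$. It then remains to show that $\Z(G)=1$ forces $G=P_n$.

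This is a well-known result from the zero forcing literature, and I would cite it. For a self-contained argument, I would proceed as follows.

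\emph{Connectedness.} By Observation~\ref{basic}(\ref{basic4}), or directly, each component needs its own initially filled vertex. With $|B|=1$, the graph $G$ is therefore connected.

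\emph{Chain of forces.} Let $B=\{v_1\}$. Each filled vertex forces at most one vertex, and initially only one vertex is filled. So at every time step at most one new vertex is filled, namely the one forced by the most recently filled vertex; any earlier filled vertex has already used its force. This makes the forcing process a single chain $v_1\to v_2\to\cdots\to v_n$ covering $V(G)$.

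\emph{Edges.} It remains to show that $G$ has no edge $v_iv_j$ with $j>i+1$. When $v_i$ forces $v_{i+1}$ (and for $i=1$ this is the first force), $v_{i+1}$ must be the unique unfilled neighbor of $v_i$. At that time the unfilled vertices are exactly $v_{i+1},\dots,v_n$. So $v_i$ has no neighbor among $v_{i+2},\dots,v_n$. Thus $E(G)=\{v_iv_{i+1}\}$ and $G=P_n$.

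The only delicate point is that the transmission process is used solely through Observation~\ref{basic}, so its additive weights play no role. I expect no real obstacle; the main care needed is checking that a single vertex yields a single chain. If the paper prefers brevity, I would simply cite the known characterization of $\Z(G)=1$ (e.g., from \cite{Barioli2010ZeroFP} or the standard literature).
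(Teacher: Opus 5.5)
Your proposal is correct and matches the paper's proof: both combine $\Z(G)\le\Zab(G)$ from Observation~\ref{basic} with the known fact that $\Z(G)=1$ if and only if $G$ is a path, which the paper cites from \cite{AIM2008}. Your self-contained chain argument for that fact is also sound; it just supplies detail the paper leaves to the citation.
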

\begin{proof}
    This follows immediately from \Cref{basic} and the fact that $\Z(G)=1$ if and only if $G$ is a path \cite{AIM2008}.
\end{proof}

The converse of the previous proposition is false for transmission zero forcing and will be investigated in \Cref{thm:paths}.  Since $\delta(G) \leq \Z(G)$ for any graph $G$, \Cref{basic} immediately implies $\delta(G) \leq \Zab(G)$.  We turn our attention to showing that there are always values of $\alpha, \beta$ for which $\Zab(G)$ is maximal.

\begin{theorem} \label{z-is-n}
    For any graph $G$ on $n$ vertices, $\Z_{\alpha,\beta}(G)=n$ if any only if $\Delta(G)\alpha  <\beta$. 
\end{theorem}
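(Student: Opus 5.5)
We prove the two directions separately. Both rest on one fact: in any run of the process, every vertex that ever has positive weight has weight at most $1$. Initially filled vertices have weight $1$ and never change. A vertex outside $B$ gains weight only while it is unfilled. It gains at most $\alpha\cdot w_{t-1}(v)$ from each neighbor $v$ that forces it, so its weight stays below the threshold $\beta\le 1$ until the step at which it becomes filled.

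\emph{($\Leftarrow$)} Assume $\Delta(G)\alpha<\beta$, and let $B$ be a transmission forcing set with $B\neq V(G)$. We aim for a contradiction. Let $t\ge 1$ be the first time step at which some vertex $u\notin B$ is filled. Up to time $t-1$ every filled vertex lies in $B$ and has weight $1$. Before time $t$, an unfilled vertex never changes status, so $u$ stays unfilled and a neighbor $v$ that stops forcing $u$ never starts again. Each of $u$'s at most $\deg(u)\le\Delta(G)$ neighbors therefore contributes $\alpha$ to $u$ at most once before $u$ is filled.
\begin{itemize}
\item If a filled neighbor can force $u$ at some step, it can force $u$ at every later step while $u$ stays unfilled. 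Such repeated forcing would make the contributions cumulative.
\item The definition of $\mathcal{V}_{t,u}$ alone does not forbid this. It is the paper's stated convention, from the Introduction, that each vertex transmits at most once, and Example~\ref{ex:ygraph} and Figure~\ref{fig:ygraph} are consistent with it.
\end{itemize}
We will invoke this convention explicitly to rule out repeated forcing. Granting it, $w_t(u)\le \deg(u)\,\alpha\le\Delta(G)\alpha<\beta$, which contradicts $u$ becoming filled. Hence no vertex outside $B$ is ever filled, so $B=V(G)$ and $\Zab(G)=n$.

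\emph{($\Rightarrow$)} We argue by contrapositive. Suppose $\Delta(G)\alpha\ge\beta$, and pick $u$ with $\deg(u)=\Delta(G)$. Assuming $\Delta(G)\ge 1$, we take $B=V(G)\setminus\{u\}$. At time $1$, every neighbor of $u$ has $u$ as its only unfilled neighbor, so all of them force $u$. This gives $w_1(u)=\Delta(G)\alpha\ge\beta$. Thus $B$ is a transmission forcing set of size $n-1$, and $\Zab(G)<n$.

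Two loose ends remain. First, if $\Delta(G)=0$, the condition reads $0<\beta$. Every edgeless graph needs all of its vertices, except when $\beta=0$, where $B=\emptyset$ works trivially; this matches the statement. Second, the main obstacle is the single-transmission issue in the ($\Leftarrow$) direction: making sure that a neighbor that remains eligible does not add $\alpha$ repeatedly. I would settle it by citing the at-most-once transmission convention, which $\mathcal{F}(B)$ records as a set of forces, each force used once. If instead $\beta\le\alpha$, one neighbor already suffices, and the inequality $\Delta(G)\alpha\ge\beta$ covers that case too.
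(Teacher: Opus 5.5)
Your proof is correct and is essentially the paper's argument. In one direction, the first vertex outside $B$ to be filled can collect at most $\Delta(G)\alpha<\beta$; in the other, $B=V(G)\setminus\{u\}$ for a vertex $u$ of maximum degree fills $u$ at time $1$.

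Two minor remarks. First, the paper's rule that the process terminates once $B^{\ab}_{(t+1)}=\emptyset$ already forces your first filling time to be $t=1$, which is how the paper argues, so the repeated-forcing issue never arises here. Second, your opening claim that every weight is at most $1$ is false (e.g.\ the weight $2\alpha=1.6$ in Figure~\ref{fig:ygraph} with $\alpha=0.8$), though nothing in your argument depends on it.
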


\begin{proof} 
If $G$ is edgeless, then $\Zab(G) = \Z(G) = n$ and $\Delta(G)\alpha = 0 < \beta$, so assume that $G$ has an edge.

Suppose first that $\Delta(G)\alpha <\beta$ and that an  $(\alpha,\beta)$-transmission forcing set $B$ of $G$ satisfies $|B| \leq n-1$.  Then there exists a vertex $u \in B^{(\alpha,\beta)}_{(1)}$ that must be filled in the first time step.  Since $w_1(u) = m\cdot \alpha$, where $m$ is the number of vertices in $B$ that had $u$ as their unique unfilled neighbor, and $m\leq \Delta(G)$, we obtain a contradiction. 

Now suppose $\Delta(G)\alpha \geq \beta$. Let $u$ be vertex of degree $\Delta(G)\geq 1$. Let $B=V(G)\setminus \{u\}$. Then every vertex in $B$ has at most one unfilled neighbor, namely $u$.  So $u$ becomes filled in the first round, and thus $\Zab(G)\leq n-1$. 
\end{proof}

\Cref{z-is-n} highlights a fundamental difference between transmission zero forcing and standard zero forcing, where $\Z(G) \leq n-1$ if and only if $G$ has an edge.  The next result further improves \Cref{z-is-n}.

\begin{theorem} \label{thm:delta-1}
Let $G$ be a graph with maximum degree $\Delta = \Delta(G)$. If $\Delta \alpha\geq \beta>(\Delta -1)\alpha$, then $\Zab(G)=|V(G)\setminus S|$, where $S$ is the largest subset of vertices of $V(G)$ of degree $\Delta$, all of whose pairwise distances are at least 3. 
\end{theorem}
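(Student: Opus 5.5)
We prove two inequalities: an upper bound from an explicit construction, and a matching lower bound from analysing which vertices can ever become filled. The key fact throughout is this. Every vertex has weight at most $1$, since weights only grow by $\alpha$ times existing weights. Hence an unfilled vertex $u$ can reach weight $\beta>(\Delta-1)\alpha$ only if it receives contributions from at least $\Delta$ forcing vertices. In more detail, each contribution to $u$ is $\alpha w(v)\le\alpha$. If at most $\Delta-1$ neighbors force $u$ in total over all time steps (each filled vertex forces at most once), then $w(u)\le(\Delta-1)\alpha<\beta$. So every vertex outside $B$ must have degree exactly $\Delta$, and all $\Delta$ of its neighbors must force it.

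\textbf{Upper bound.} Let $S$ be a set of degree-$\Delta$ vertices with pairwise distances at least $3$, and take $B=V(G)\setminus S$. Consider any $v\in B$. Two distinct vertices of $S$ cannot both be neighbors of $v$, since that would put them at distance at most $2$. So $v$ has at most one unfilled neighbor, and if it has one, that neighbor is a vertex of $S$. At time $1$, each $u\in S$ is therefore forced by all $\Delta$ of its neighbors. Each of these neighbors lies in $B$ because $S$ is independent, and each has weight $1$. So $w_1(u)=\Delta\alpha\ge\beta$, and all of $S$ fills at time $1$. This gives $\Zab(G)\le n-|S|$.

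\textbf{Lower bound.} Let $B$ be any transmission forcing set and let $T=V(G)\setminus B$. We will show that $T$ is a set of degree-$\Delta$ vertices with pairwise distances at least $3$, which gives $|B|\ge n-|S|$ for the largest such $S$.

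The degree condition follows from the key fact: every $u\in T$ has degree $\Delta$ and is forced by all of its neighbors.

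Independence: suppose $u,u'\in T$ are adjacent. Then $u'$ must force $u$ and $u$ must force $u'$. Forcing requires the forcer to be already filled, so whichever of the two fills first can only have been forced by the other after that other was filled. This is a contradiction.

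No common neighbor: suppose $u,u'\in T$ share a neighbor $v$. By the key fact, $v$ must force both $u$ and $u'$. But $v$ forces at most once, since forcing requires a unique unfilled neighbor and $v$ keeps both $u,u'$ unfilled until after it forces. This is also a contradiction. A little care is needed in the case $v\in T$, but the same argument applies: $v$ forces only one vertex ever, because after its force its target is filled and it has no unfilled neighbors left.

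\textbf{Main obstacle.} The step I expect to require the most care is making rigorous that a single vertex forces at most once over the whole process, and that $w\le 1$ throughout. A forcing vertex $v$ with unique unfilled neighbor $u$ continues to satisfy $v\in\mathcal{V}_{t,u}$ at every later step until $u$ fills. So a vertex may contribute repeatedly to the same target. This does not break the argument, because the number of distinct forcers of $u$ is still what matters. However, the bound $w(u)\le(\Delta-1)\alpha$ must then be reformulated: $u$ fills at the first time its weight reaches $\beta$. I would handle this by bounding the weight gained in the single step that fills $u$, together with the accumulated weight, by the number of distinct forcers. If the definition indeed permits repeated contributions, that bound needs checking. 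I would read \eqref{weightdef} as a one-time contribution per force, consistent with the paper's examples.
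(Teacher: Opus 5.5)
The overall structure of your proof matches the paper: an explicit construction for the upper bound, and for the lower bound a proof that the complement $T$ of any forcing set consists of degree-$\Delta$ vertices at pairwise distance at least $3$. Your construction and your independence and common-neighbor deductions are fine. The gap is in the ``key fact'' that carries the lower bound: it is false that every vertex has weight at most $1$. Weights add across forcers. In this very regime, each vertex of your set $S$ ends with weight $\Delta\alpha$, which exceeds $1$ whenever $\alpha>1/\Delta$. This is compatible with the hypotheses, e.g.\ $\Delta=2$, $\alpha=0.8$, $\beta=0.9$. The error is not harmless. With these values, a filled vertex of weight $2\alpha$ that ever forced would deliver $2\alpha^2=1.28\ge\beta$ by itself. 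So the bound $\alpha w(v)\le\alpha$ on each contribution, and hence ``a vertex outside $B$ needs $\Delta$ distinct forcers,'' is unjustified as written. For the same reason, the ``main obstacle'' you plan to overcome (showing $w\le 1$ throughout) cannot be overcome, because $w\le 1$ is the wrong invariant.

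What is true, and what you need, is that every vertex that ever performs a force lies in $B$, and so has weight exactly $1$. Prove this by induction on the time step. Suppose all forcers before step $t$ are in $B$. Then a vertex $u$ that fills at step $t$ has weight $\alpha$ times its number of distinct forcers. Since $\beta>(\Delta-1)\alpha$, this forces $\deg u=\Delta$, with every neighbor of $u$ already filled and forcing $u$. Hence after filling, $u$ has no unfilled neighbor and never forces. This is exactly the core of the paper's proof, which phrases it as $V(G)=B^{(\alpha,\beta)}_1$. A vertex filled at step $1$ is forced by all $\Delta$ of its neighbors, so it can never force. A vertex of $B$ forcing at a later step would have had another initially unfilled neighbor that was filled by $\Delta$ vertices of $B$ other than itself, giving that neighbor degree $\Delta+1$. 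With this lemma in place, your remaining steps go through. Independence even becomes immediate, since all neighbors of a vertex of $T$ lie in $B$.
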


\begin{proof} Let $B$ be the initial set of filled vertices comprising an $(\alpha,\beta)$-transmission forcing set of $G$.  We claim that $V(G) = B^{(\alpha,\beta)}_1$, that is, $V(G) = B \cup B^{(\alpha,\beta)}_{(1)}$.  Since $\beta> (\Delta-1)\alpha$, only a vertex $x$ of degree $\Delta$ that is forced by all of its neighbors can be in $B^{(\alpha,\beta)}_{(1)}$, which implies $x$ cannot perform a force in a future time step.  So any vertex $b$ that performs a force in a time step $t\geq 2$ must belong to $B$ and not have performed a force in time step 1.  But this means that $b$ had at least 2 unfilled neighbors initially, all but one of which were filled during some time step by $\Delta$ vertices of $B$, implying that these unfilled neighbors of $b$ had degree $\Delta+1$.  This contradiction proves our claim.

Every vertex $x\in B^{(\alpha,\beta)}_{(1)}$ must have degree $\Delta$ and $N(x)\subseteq B$; moreover, for any $z\in N(x)$, we must have $N(z)\setminus \{x\} \subseteq B$ as well, since otherwise $z$ would have more than one unfilled neighbor at the start of time step 1.  It follows that for every $x,y\in B^{(\alpha,\beta)}_{(1)}$, $x$ and $y$ are not adjacent and $N(x)\cap N(y)=\emptyset$. Thus, the distance between any two vertices of $B^{(\alpha,\beta)}_{(1)}$ is at least 3.

Conversely, suppose that $S$ is a subset of vertices of $V(G)$ of degree $\Delta$, all of whose pairwise distances are at least 3. Let $B=V(G)\setminus S$ and $x\in S$.  Then $\deg(x) = \Delta$ and $N(x) \subseteq B$.  And if $y\in N(x)$, then $N(y)\cap S = \{x\}$, so $x\in B^{(\alpha,\beta)}_{(1)}$.  Hence, $S\subseteq B^{(\alpha,\beta)}_{(1)}$ and $V(G) = B^{(\alpha,\beta)}_1$.  So $B$ is a transmission forcing set for $(\alpha,\beta)$ if and only if $S = V(G)\setminus B$ is a subset of vertices of degree $\Delta$, all of whose pairwise distances are at least 3; the cardinality of $B$ is minimized when the cardinality of $S$ is maximized.
\end{proof}

\begin{corollary} \label{cor:delta-1}
    Suppose $G$ is a graph on $n$ vertices with $\mathrm{diam}(G) = 2$ and $\Delta(G) = \Delta$.  If $\Delta \alpha\geq \beta> (\Delta-1)\alpha$, then $\Zab(G)=n-1$.
\end{corollary}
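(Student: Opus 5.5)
This follows directly from \Cref{thm:delta-1}. Under the hypothesis $\Delta\alpha \geq \beta > (\Delta-1)\alpha$, that theorem gives $\Zab(G) = |V(G)\setminus S| = n - |S|$, where $S$ is a largest set of degree-$\Delta$ vertices whose pairwise distances are all at least $3$. It therefore suffices to show that $|S| = 1$.

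For the upper bound, since $\mathrm{diam}(G) = 2$, any two distinct vertices of $G$ are at distance at most $2$. Hence no set containing two distinct vertices can have all pairwise distances at least $3$, and so $|S| \leq 1$.

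For the lower bound, $G$ has at least one vertex of degree $\Delta$, and a single such vertex vacuously satisfies the pairwise-distance condition, so $|S| \geq 1$. Here we use that $\mathrm{diam}(G)=2$ forces $G$ to be connected with at least one edge, so $\Delta \geq 1$ and the hypothesis of \Cref{thm:delta-1} is meaningful. Combining the two bounds gives $|S| = 1$, and thus $\Zab(G) = n-1$.

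No step here is a real obstacle. The only point needing care is the convention that $\mathrm{diam}(G)=2$ entails finite distances, that is, that $G$ is connected. If one allowed disconnected graphs with infinite distances, the upper bound $|S|\le 1$ could fail, so I will state that convention explicitly.
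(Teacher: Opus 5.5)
Your proposal is correct and is the intended argument. The paper gives no separate proof and treats this as an immediate consequence of Theorem~\ref{thm:delta-1}: diameter $2$ (which entails connectivity) rules out two vertices at distance at least $3$, so the largest admissible $S$ is a single vertex of degree $\Delta$, giving $\Zab(G)=n-|S|=n-1$.
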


\subsection{Forcing Forests}

In light of the different forcing processes illustrated in \Cref{ex:doubleygraph}, our next definition is crucial for transmission zero forcing. 
\begin{definition}
    Given a graph $G$ and $B \subseteq V(G)$, the {\em forcing subgraph} for $B$ in $G$ is $\Gamma=FF(G,B)$, where $V(\Gamma) = V(G)$ and $E(\Gamma) = \{ uv \ | \ u\rightarrow v \in \mathcal{F}(B) \}$. Since the forcing subgraph is determined by the set of forces, which are inherently directed, it is natural to view the forcing subgraph as a directed graph, which we denote by $\overrightarrow{FF}(G,B)$.
\end{definition}

Note that a vertex in $\overrightarrow{FF}(G,B)$ can have arbitrary in-degree, but it can have out-degree at most 1.  The next proposition describes the structure of forcing subgraphs. 

\begin{proposition}\label{forcing_forest}
    If $FF(G,B)$ is the forcing subgraph for the set $B$ in a graph $G$, then $FF(G,B)$ is a forest. 
\end{proposition}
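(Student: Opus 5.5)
We will show that the forcing subgraph $FF(G,B)$ has no cycle. The idea is to attach to each vertex a time stamp, namely the time step at which it becomes filled ($0$ for vertices of $B$), and show that time stamps strictly increase along every force. Concretely, for $v\in V(G)$ let $\tau(v)$ be the unique $t$ with $v\in B_{(t)}^{\ab}$. Set $\tau(v)=\infty$ if $v$ is never filled.

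The first step is to record two facts that follow directly from the transmission color change rule and \eqref{weightdef}. First, if $v\rightarrow u$ is a force in $\mathcal{F}(B)$ performed at time step $t$, then $v\in B_{t-1}^{\ab}$, so $\tau(v)\le t-1$. Second, at that moment $u\notin B_{t-1}^{\ab}$, so $\tau(u)\ge t$. Hence $\tau(v)<\tau(u)$ for every directed edge $v\rightarrow u$ of $\overrightarrow{FF}(G,B)$.

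Next we rule out the degenerate cases. A pair of opposite forces $v\rightarrow u$ and $u\rightarrow v$ is impossible, since it would give $\tau(v)<\tau(u)<\tau(v)$. So the underlying undirected graph $FF(G,B)$ is simple, and each of its edges carries a unique orientation in which $\tau$ increases.

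The main step is then to suppose $FF(G,B)$ contains an undirected cycle $C$ and derive a contradiction. Orient each edge of $C$ according to its force. Let $x$ be a vertex of $C$ with $\tau(x)$ maximal on $C$. Its two neighbors $y,z$ on $C$ both satisfy $\tau(y),\tau(z)\le\tau(x)$. Since $\tau$ strictly increases along forces, the edges $xy$ and $xz$ cannot be forces out of $x$, so they must be forces $y\rightarrow x$ and $z\rightarrow x$. This by itself is no contradiction, because in-degree is unbounded, so we use out-degree instead. Take instead $x$ on $C$ with $\tau(x)$ minimal. Both cycle edges at $x$ must then be directed out of $x$, since a force into $x$ would come from a vertex with smaller $\tau$. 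So $x$ has out-degree at least $2$.

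It remains to check that out-degree is at most $1$, which is the one point needing care. A filled vertex $v$ forces only when it has exactly one unfilled neighbor $u$. Once $u$ is filled, $v$ has no unfilled neighbors, and since filled vertices never become unfilled, $v$ can never force again. While $u$ is still unfilled, $u$ remains $v$'s only unfilled neighbor, so every force $v$ makes targets $u$. The multiset of forces thus contributes at most one edge out of $v$. This contradicts out-degree at least $2$ at $x$, so $FF(G,B)$ is acyclic, i.e., a forest.
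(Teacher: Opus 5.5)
Your proof is correct and uses the same idea as the paper's one-line argument: a force always goes from a vertex filled earlier to one that is still unfilled, so filling times strictly increase along forces. The paper only rules out a directed cycle $v_0\rightarrow v_1\rightarrow\cdots\rightarrow v_r\rightarrow v_0$, and leaves implicit why every cycle of $FF(G,B)$ must be directed; that step rests on the unproved remark that out-degree is at most $1$. Your minimal-$\tau$ argument, together with your explicit proof of the out-degree bound, handles undirected cycles directly and so fills that gap.
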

\begin{proof}
    Suppose that $FF(G,B)$ contains a cycle $v_0 \rightarrow v_1 \rightarrow \dots \rightarrow v_r \rightarrow v_0$.  Then $v_0$ is forced after it has already performed a force, a contradiction.
\end{proof}

As a consequence of \Cref{forcing_forest}, we may now refer to the forcing subgraphs as forcing forests and their components as forcing trees.

\begin{proposition}\label{terminus} 
    Let $G$ be a graph, $B$ a transmission forcing set of $G$, and $FF(G,B)$ the forcing forest for $B$ in $G$.  For each forcing tree $T$ of $FF(G,B)$, let $S_T$ be the set of vertices that do not perform a force. Then $|S_T|=1$ for each forcing tree T.   
\end{proposition}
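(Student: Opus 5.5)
The argument treats each forcing tree $T$ as a directed graph in which every vertex has out-degree at most one, and combines that structure with finiteness and acyclicity. By \Cref{forcing_forest}, $FF(G,B)$ is a forest. In $\overrightarrow{FF}(G,B)$ every vertex has out-degree at most $1$, since a filled vertex forces at most once: once it forces its unique unfilled neighbor, it has no unfilled neighbors left. Write $S_T$ for the set of vertices of $T$ with out-degree $0$ in $\overrightarrow{FF}(G,B)$. The proof has two parts: show $S_T \neq \emptyset$, then show $|S_T| \le 1$.

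\textbf{Existence.} Start at any vertex $x$ of $T$ and follow out-edges $x \to x_1 \to x_2 \to \cdots$. Because $T$ is finite and $\overrightarrow{FF}(G,B)$ contains no directed cycle (by \Cref{forcing_forest}, or directly because a vertex is never forced after it has forced), the walk must stop. It stops at a vertex with out-degree $0$, so $S_T \neq \emptyset$.

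\textbf{Uniqueness.} Suppose $T$ has $m$ vertices. It has $m-1$ edges, and each edge is the out-edge of exactly one vertex. Since out-degrees are $0$ or $1$, exactly $m-1$ vertices have out-degree $1$. Hence exactly one vertex has out-degree $0$, and $|S_T| = 1$.

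The main obstacle is justifying that every edge of $FF(G,B)$ carries a single, consistent orientation, so that counting edges equals counting forcing vertices. It must be ruled out that both $u \to v$ and $v \to u$ lie in $\mathcal{F}(B)$. This holds because a force $u \to v$ requires $v$ to be unfilled at that time. After that step $v$ is filled and $u$ is already filled, so $v$ can never later force $u$.

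It also needs checking that the out-degree bound is genuinely at most one. A vertex $v$ lies in $\mathcal{V}_{t,u}$ only when $u$ is its unique unfilled neighbor. Once $u$ is filled, $v$ has no unfilled neighbors, so $v$ cannot force again.
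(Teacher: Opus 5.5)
Your proof is correct, but it takes a different route from the paper's. The paper argues by contradiction and only proves $|S_T|\le 1$. If $z_1\neq z_2$ in $T$ both fail to force, they are forced by some $u\to z_1$ and $v\to z_2$ with $u\neq v$. The tree path joining $u$ and $v$ must then contain a vertex with two out-edges. That $S_T\neq\emptyset$ is left implicit.

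You instead count. Once no edge is forced in both directions, the $m-1$ edges of $T$ correspond one-to-one with the forces inside $T$. Since out-degrees are $0$ or $1$, exactly $m-1$ vertices force, so $|S_T|=m-(m-1)=1$. This gives existence and uniqueness at once, so your walk argument for existence is redundant, though harmless. It also covers the one-vertex tree without a special case. It makes explicit that each edge has a single well-defined orientation, which the paper's path argument needs but never states. The paper's local argument, in turn, shows directly why orientations along a tree path cannot point away from each other. That is what underlies the subsequent remark that $\overrightarrow{T}$ is an in-tree rooted at the terminus.

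One correction to your justifications. In transmission forcing, a force $v\to u$ need not make $u$ filled, because its weight may stay below $\beta$. For example, the vertex $v$ in Example~\ref{ex:doubleygraph} first receives only $\alpha<\beta$ and stays unfilled. So ``once it forces its unique unfilled neighbor, it has no unfilled neighbors left'' and ``after that step $v$ is filled'' are false as stated. Your conclusions still hold, for the following reasons.
\begin{itemize}
\item Filled vertices stay filled, so $u$ remains the unique unfilled neighbor of $v$ until $u$ fills. Hence every force by $v$ targets $u$, and the out-degree is at most $1$. The paper also notes this directly before Proposition~\ref{forcing_forest}.
\item For the antiparallel case, take $u\to v$ to be the earlier (or simultaneous) of the two forces. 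Then $u$ is already filled at that step and stays filled, so $v\to u$ can never occur. Only the filledness of $u$ is needed.
\end{itemize}
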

\begin{proof} 
    Suppose to the contrary that there are two vertices $z_1$ and $z_2$ in the component $T$ that do not perform a force. Since they each must have a neighbor in $T$, there exist vertices $u\neq v \in \overrightarrow{T}$ such that $u \rightarrow z_1, v\rightarrow z_2 \in \mathcal{F}(B)$.  There exists a path $P$ in $T$ between $u$ and $v$.  Since $u \rightarrow z_1$ (respectively, $v \rightarrow z_2$) is the only out-edge incident with $u$ (respectively, $v$), the directed path $\overrightarrow{P}$ in $\overrightarrow{T}$ must contain a vertex with two out-edges, a contradiction.
\end{proof}

In light of the previous proposition, we give a name to this unique element of $S_T$.

\begin{definition}
    Let $G$ be a graph, $B$ a transmission forcing set of $G$, and $FF(G,B)$ the forcing forest for $B$ in $G$.  For each component $T$ in $FF(G,B)$, the unique element that does not perform a force is called the {\em terminus} of $T$.
\end{definition}

Taking the terminus $z$ as the root of a directed forcing tree $\ora{T} \in \ora{FF}(G,B)$, the direction of each edge of $\ora{T}$ is toward $z$; that is, $\ora{T}$ is an in-tree \cite[p. 89]{west1996intro}. Also, the distance from any leaf in $\ora{T}$ to $z$ is the same whether $T$ is directed or undirected. 

\begin{definition} \label{ffarrow}
    Let $G$ be a graph and $B \subseteq V(G)$ a set of initially filled vertices.  The set of {\em initial vertices that contribute weight} to $v\in V(G)$, denoted $R_v \subseteq B$, is the set of vertices $b \in B$ for which there exists a directed path from  $b$ to $v$ in  $\ora{FF}(G,B)$. 
\end{definition}

Note that for each $v\in V(G)\setminus B$, $R_v$ is nonempty whenever $B$ is a transmission forcing set.

\begin{observation} \label{prop:leaves}
    Suppose $\overrightarrow{T_i}$ is a forcing tree in $\ora{FF}(G,B)$ with terminus $z_i$ and let $L_i = V(T_i)\cap B$.  Then for every $v\in V(T_i)$, $R_v \subseteq L_i$; in particular, $R_{z_i} = L_i$.
\end{observation}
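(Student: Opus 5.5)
Write $R_v$ for the set of $b\in B$ with a directed path from $b$ to $v$ in $\ora{FF}(G,B)$, and fix a forcing tree $\ora{T_i}$ with terminus $z_i$ and $L_i=V(T_i)\cap B$. The claim splits into two inclusions for the terminus and one containment for a general vertex. Throughout I use that the connected components of $FF(G,B)$ are exactly the forcing trees, and that each directed edge $u\rightarrow v$ of $\ora{FF}(G,B)$ joins two vertices of the same tree.

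First take any $v\in V(T_i)$ and $b\in R_v$. By definition there is a directed path from $b$ to $v$ in $\ora{FF}(G,B)$. Every edge of this path is an edge of $FF(G,B)$, so the path lies in a single component, namely the one containing $v$, which is $T_i$. Hence $b\in V(T_i)$, and since $R_v\subseteq B$, we get $b\in V(T_i)\cap B=L_i$. This proves $R_v\subseteq L_i$, and in particular $R_{z_i}\subseteq L_i$.

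For the reverse inclusion $L_i\subseteq R_{z_i}$, take $b\in L_i$ and follow out-edges from $b$. Each vertex has out-degree at most one, and by Proposition~\ref{terminus} every vertex of $T_i$ other than $z_i$ performs a force, so it has out-degree exactly one. Starting at $b$, we step along the unique out-edge whenever the current vertex is not $z_i$. The resulting walk never repeats a vertex, since $FF(G,B)$ is a forest by Proposition~\ref{forcing_forest} and a repeat would create a directed cycle. It also stays inside $T_i$, which is finite. Therefore the walk must stop, and the only vertex where it can stop is $z_i$. The walk is thus a directed path from $b$ to $z_i$, so $b\in R_{z_i}$ (the trivial path suffices if $b=z_i$).

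The only point needing care is this last step: it needs both the out-degree-one property of non-terminus vertices and acyclicity, so that the walk terminates and terminates at $z_i$. The rest is routine bookkeeping about components.
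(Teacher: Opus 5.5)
Your proof is correct and is essentially the argument the paper leaves implicit: the paper states this as an observation without proof. It relies on the preceding remark that each forcing tree is an in-tree rooted at its terminus, which your out-edge walk via Proposition~\ref{terminus} establishes, together with the fact that a directed path stays inside one component. One small tightening: acyclicity of the undirected forest $FF(G,B)$ does not by itself exclude a directed $2$-cycle $u\rightarrow v\rightarrow u$. That case is ruled out directly, because $u\rightarrow v$ requires $u$ filled and $v$ unfilled, the reverse force requires the opposite, and filled vertices never become unfilled; this is the directed-cycle argument in the proof of Proposition~\ref{forcing_forest}.
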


While the forcing subgraphs are indeed composed of trees, these trees need not be induced subgraphs of the graph, as seen in the next example. In contrast, the analogous structure in zero forcing are zero forcing chain sets, which form induced (vertex) path covers of $G$, see \cite{AIM2008}.

\begin{example}\rm \label{ex:ff}
    Let $G= K_n$ and set $\alpha = \beta =1$.  Then $\Zab(G) = \Z(G) = n-1$.  If $B$ is a transmission forcing set of $G$ with cardinality $n-1$, then $FF(G,B) = K_{1,n-1}$. 
\end{example}

Even if the graph is a tree, its forcing forest may have multiple components.  In \Cref{fig:ygraph}, $FF(G,B_1)$ and $FF(G,B_2)$ are each the union of two trees while $FF(G,B_3)$ is a single tree.  In \Cref{fig:doubleygraph}, $FF(G,B)$ for the graph on the left is the union of two trees while the forcing forest on the right is a single tree.

The final weight of each vertex depends both on propagation distances and additivity of weights.  We end this subsection with some technical results showcasing these issues.

\begin{observation}\rm \label{ob:u-weight}
Let $G$ be a graph, $B \subseteq V(G)$, and $\overrightarrow{F}= \overrightarrow{FF}(G,B)$.
\begin{enumerate}
    \item For each $v\in V(G)$, $w(v) = \displaystyle \sum_{b \in R_v} \alpha^{\scriptsize \ora{\dist}_{\scriptsize \ora{F}}(b,v)}$. 
    \item  For each $v \not\in B$, $w(v)=0$ or $|R_v| \cdot \alpha \geq w(v) \geq |R_v| \cdot \alpha^k $, \;$k = \displaystyle \max_{b\in R_v}\;\ora{\dist}_{\scriptsize \ora{F}}(b,v)$.
\end{enumerate}
     
\end{observation}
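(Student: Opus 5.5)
The plan is to prove part 1 by strong induction on the time step at which $v$ is filled, and then read off part 2 directly from the formula in part 1.

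\textbf{Part 1.} If $v\in B$, then $w(v)=w_0(v)=1$. Moreover $R_v=\{v\}$: since $\ora{F}$ is an in-forest with out-degree at most one and no vertex of $B$ is ever forced (forces only target unfilled vertices), the only directed path from an element of $B$ ending at $v$ is the trivial one. The sum is therefore $\alpha^0=1$.

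For $v\notin B$, I will use \eqref{weightdef}. Weights only change while a vertex is unfilled, and a vertex that has received no force has weight $0$. Hence $w(v)$ is the sum, over all forces $u\rightarrow v$ in $\mathcal{F}(B)$, of $\alpha\, w_{t-1}(u)$, where $t$ is the time step of that force. Two facts about such a $u$ are needed:
\begin{itemize}
\item $u$ is already filled at time $t-1$, so $w_{t-1}(u)=w(u)$ is its final weight.
\item $u$ forces exactly once, since it has out-degree at most one in $\ora{F}$.
\end{itemize}
Because $u$ was filled strictly earlier than $v$, the induction hypothesis gives $w(u)=\sum_{b\in R_u}\alpha^{\ora{\dist}(b,u)}$.

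Next I need $R_v$ to be the disjoint union of the sets $R_u$ over the in-neighbours $u$ of $v$. Every directed path from some $b\in B$ to $v$ passes through exactly one in-neighbour $u$ of $v$. Since $\ora{F}$ is a forest (\Cref{forcing_forest}) with out-degree at most one, the directed path from $b$ to $v$ is unique. Hence each $b$ lies in exactly one $R_u$, and $\ora{\dist}(b,v)=\ora{\dist}(b,u)+1$.

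Substituting, I get $w(v)=\sum_u \alpha\sum_{b\in R_u}\alpha^{\ora{\dist}(b,u)}=\sum_{b\in R_v}\alpha^{\ora{\dist}(b,v)}$. The case $R_v=\emptyset$ (no forces reach $v$) gives $w(v)=0$, which agrees with the empty sum.

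\textbf{Part 2.} For $v\notin B$ with $w(v)\neq 0$, the set $R_v$ is nonempty and every $b\in R_v$ satisfies $1\le \ora{\dist}(b,v)\le k$. Since $\alpha\in[0,1]$, each term $\alpha^{\ora{\dist}(b,v)}$ lies between $\alpha^k$ and $\alpha$. Summing over the $|R_v|$ terms gives the stated bounds.

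\textbf{Main obstacle.} The delicate step is justifying that each forcer contributes its \emph{final} weight. This relies on the fact that a vertex forces only after it is filled, and its weight is frozen from then on by the second case of \eqref{weightdef}. It also requires that no vertex forces twice, so there is no double counting; the out-degree bound handles this.
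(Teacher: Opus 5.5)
Your decomposition is sound. Weights freeze once a vertex is filled. Directed paths in $\ora{F}$ are unique, by out-degree at most one and acyclicity, so for $v\notin B$ the set $R_v$ splits as the disjoint union of the $R_u$ over in-neighbours $u$ of $v$, with $\ora{\dist}(b,v)=\ora{\dist}(b,u)+1$. Part 2 then follows from monotonicity of $d\mapsto\alpha^d$. (The paper states this as an observation without proof, so this is the natural argument.)

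The gap is exactly at the step you flagged as the main obstacle. You claim that $u$ transmits to $v$ only once ``since it has out-degree at most one in $\ora{F}$.'' But the out-degree bound only says that $u$ never forces two \emph{different} vertices. The arc $u\rightarrow v$ appears once in $\ora{F}$ no matter how many time steps $u$ transmits along it. Read literally, \eqref{weightdef} puts $u$ in $\mathcal{V}_{t,v}$ at every step $t$ at which $u$ is filled and $v$ is still its unique unfilled neighbour. So if $v$ does not reach $\beta$ at once, $u$ contributes $\alpha w(u)$ again at the next step. Under that reading the statement is false. In \Cref{ex:doubleygraph} with $\alpha=0.7$, $\beta=0.8$, the filled leaf adjacent to $v$ would contribute $\alpha$ at steps 1 and 2. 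Then $v$ is filled at step 2 with weight $2\alpha$, simultaneously with $u$, which therefore never forces $v$. So $R_v$ is that leaf alone, and both the formula of part 1 and the upper bound $|R_v|\alpha\ge w(v)$ fail.

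What closes the gap is the model's rule that a filled vertex transmits weight at most once. The introduction states this: vertices may receive weight many times but transmit at most once. It is also what produces $w(v)=\alpha+2\alpha^3$ in \Cref{fig:doubleygraph}. Concretely, $\mathcal{V}_{t,u}$ must be read as the set of vertices whose single transmission to $u$ occurs at time $t$. With that rule, each arc $u\rightarrow v$ of $\ora{F}$ corresponds to exactly one transmission of $\alpha w(u)$, at the first step at which $u$ is filled with $v$ as its unique unfilled neighbour. The rest of your argument then goes through verbatim.

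A minor point: since $B$ is arbitrary, some vertices may be forced without ever being filled. Induction ``on the time at which $v$ is filled'' does not literally reach them. Handle them in a final step (all their forcers are filled), or induct on depth in the in-forest instead.
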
  

\begin{corollary} \label{cor-t} 
    If $\alpha^{\pt(G)} \geq \beta$, then $\Zab(G) = \Z(G)$.
\end{corollary}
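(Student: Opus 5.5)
The plan is to prove the two inequalities separately. The lower bound $\Zab(G)\geq \Z(G)$ is already part of \Cref{basic}. For the upper bound, I would fix a minimum zero forcing set $B$ with $\pt(G,B)=\pt(G)$ and show that $B$ is itself an $(\alpha,\beta)$-transmission forcing set. The idea is that, under the hypothesis $\alpha^{\pt(G)}\geq\beta$, the transmission process started from $B$ runs step for step exactly like the zero forcing process.

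Concretely, I would prove by induction on $t$, for $0\le t\le \pt(G,B)$, the following two statements:
\begin{enumerate}
    \item[(i)] $B_t^{\ab}=B_t$;
    \item[(ii)] every vertex of $B_{(s)}$ with $s\le t$ has weight $w_t\geq \alpha^{s}$.
\end{enumerate}
The base case $t=0$ is immediate, since the vertices of $B$ have weight $1=\alpha^0$. The key fact used in the inductive step is that (i) makes the filled/unfilled sets identical at time $t-1$. Hence the valid forces at time $t$ are the same in both processes: $v\rightarrow u$ is valid exactly when $v\in B_{t-1}$ and $N(v)\setminus B_{t-1}=\{u\}$.

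For the inductive step I would argue the two containments of (i) separately.
\begin{itemize}
    \item $B^{\ab}_t\subseteq B_t$: a vertex that becomes transmission-filled at time $t$ must receive at least one valid force, because weights of unfilled vertices change only through forces. It is therefore in $B_{(t)}$.
    \item $B_t\subseteq B^{\ab}_t$: take $u\in B_{(t)}$ and choose a forcer $v\in B_{t-1}=B^{\ab}_{t-1}$ with $v\in B_{(s)}$ for some $s\le t-1$. By (ii), $w_{t-1}(v)\ge\alpha^{s}\ge\alpha^{t-1}$, because $\alpha\le 1$. The force $v\rightarrow u$ then gives $w_t(u)\ge \alpha\, w_{t-1}(v)\ge \alpha^{t}\ge \alpha^{\pt(G)}\ge\beta$, again using $\alpha\le1$ and $t\le \pt(G)$. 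So $u$ is filled at time $t$.
\end{itemize}
This inequality also establishes (ii) for the new vertices. Vertices filled earlier keep their weights, since filled vertices are never updated by \eqref{weightdef}. At $t=\pt(G,B)$ we get $B^{\ab}_t=V(G)$, so $\Zab(G)\le |B|=\Z(G)$.

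The main obstacle is to avoid the naive argument that "each zero force delivers weight $\ge\alpha^t$". That argument fails if the transmission process could fill some vertex earlier than zero forcing does, via a different force, with weight only about $\beta$; forwarding such a weight would give only $\alpha\beta<\beta$. The equality (i) is exactly what rules this out, so I would carry (i) and (ii) together in one induction rather than proving a one-sided containment.
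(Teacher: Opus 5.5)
Your proposal is correct and follows the paper's route. The paper takes a minimum zero forcing set $B$ with $\pt(G,B)=\pt(G)$, asserts in one line that every vertex ends with weight at least $\alpha^{\pt(G)}\geq\beta$, and gets the reverse inequality from \Cref{basic}; your step-by-step induction on (i) and (ii) supplies the justification for that weight bound that the paper leaves implicit.
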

\begin{proof}
    Let $B$ be a minimum zero forcing set of $G$ such that $\pt(G,B) =\pt(G)$.  Allow the $(\alpha, \beta)$-transmission forcing process to proceed from the initially filled set $B$.  If $v\in B$, $w(v) = 1$, and for $v\in V(G)\setminus B$, $w(v) \geq \alpha^{\pt(G)}$.  
    So $\Zab(G) \leq \Z(G)$. \Cref{basic} establishes the reverse inequality.
\end{proof}

Note that the converse of \Cref{cor-t} need not hold. For example, since $K_3$ is complete, $\Z(K_3)=2$ and $\pt(K_{3})=1$. Let $\alpha = \frac{1}{2}$ and $\beta = \frac{2}{3}$, then  $\Zab(K_3)=\Z(K_3)$, with the same propagation time, but $(\frac{1}{2})^1<\frac{2}{3}$.

\begin{definition}
    Let $B\subseteq V(G)$ be a transmission forcing set of $G$, let $FF(G,B)$ be the forcing forest for $B$ in $G$, and let $\{T_i\}_{i=1}^q$ be the set of forcing trees in $FF(G,B)$.  Denote by $z_i$ the terminus of $T_i$ for $1\leq i\leq q$.  

      \begin{enumerate}
        \item Let $m(T_i)$ denote the minimum forcing distance of $T_i$, i.e. $m(T_i)=\min\{ \dist_{T_i}(b,z_i) \ | \ b \in B \}$.  
        \item Let $m^{\ell}(G,B)$ denote the largest minimum forcing distance of $B$ in $G$, i.e. $m^{\ell}(G,B)= \max\{ m(T_i) \ | \ 1\leq i\leq q\}$. 
    \end{enumerate}
\end{definition}

\begin{proposition}
Let $G$ be a graph and $B \subseteq V(G)$ be an $(\alpha, \beta)$-transmission forcing set. 
For each $v\in V(G)$, $w(v) \geq \alpha^{m^{\ell}(G,B)}$.
\end{proposition}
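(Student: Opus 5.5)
The plan is to reduce the statement to a distance estimate inside a single forcing tree, using the weight formula of \Cref{ob:u-weight}. If $v\in B$, then $w(v)=1\geq \alpha^{m^{\ell}(G,B)}$ because $\alpha\le 1$, so only $v\notin B$ needs work. For such $v$, let $T_i$ be the forcing tree of $FF(G,B)$ containing $v$, with terminus $z_i$. Since $B$ is a transmission forcing set, $R_v\neq\emptyset$, and by \Cref{prop:leaves} we have $R_v\subseteq L_i=V(T_i)\cap B$.

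By the first part of \Cref{ob:u-weight}, every summand of $w(v)$ is nonnegative, so
\[
w(v)=\sum_{b\in R_v}\alpha^{\ora{\dist}(b,v)}\ \geq\ \alpha^{\ora{\dist}(b^*,v)}
\]
for any single $b^*\in R_v$. It therefore suffices to find some $b^*\in R_v$ with $\ora{\dist}_{\ora{T_i}}(b^*,v)\le m(T_i)$, since $m(T_i)\le m^{\ell}(G,B)$ and $\alpha\le 1$. The structural input is that $\ora{T_i}$ is an in-tree rooted at $z_i$ in which every vertex has out-degree at most one. Consequently, for any $b\in R_v$, the unique directed path from $b$ to $z_i$ passes through $v$, and so $\ora{\dist}(b,v)\le \dist_{T_i}(b,z_i)$.

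The natural choice is to let $b^*$ be a vertex of $B\cap V(T_i)$ realizing $m(T_i)=\dist_{T_i}(b^*,z_i)$. Whenever $v$ lies on the directed path from $b^*$ to $z_i$, the previous paragraph gives $\ora{\dist}(b^*,v)\le m(T_i)$, and the inequality follows.

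The main obstacle is the remaining case, in which $v$ lies in a branch of $T_i$ that does not contain any minimizing initial vertex. Then every $b\in R_v$ may be farther from $v$ than $m(T_i)$. The configuration I would check first is a tree in which $z_i$ is forced both by some $b_1\in B$ at distance $1$ and by the end of a long chain $b_2\to x_1\to x_2\to x_3\to z_i$. Here $w(x_3)=\alpha^3$ while $m(T_i)=1$, so the bound seems to fail unless the forcing rules exclude this picture. In that case I would try to show that the rules force every initial vertex of a tree to lie at the same depth from the terminus, or else that the intended quantity in the definition is the maximum, over $b\in B$, of the distance from $b$ to $z_i$ (or the minimum over $v$ of the maximal distance into $v$). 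I would then rerun the in-tree path argument above with that corrected quantity, where it goes through verbatim. If no such restriction can be proved, the statement as written would need to be weakened accordingly.
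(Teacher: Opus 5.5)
Your handling of $v\in B$ and of vertices on the directed path from a minimizing $b^*$ to $z_i$ is correct. The case you leave open cannot be closed, though: the proposition is false as stated, and your suspected configuration can actually occur. In isolation it is blocked only by timing. If $b_1$ could force $z_i$ at time $1$, then $z_i$ would be filled at once, since $\beta\le w(x_3)=\alpha^3<\alpha$, and the long chain would never reach it. But $b_1$ can be made to wait for a neighbor outside $T_i$, and that neighbor can be filled late by trees whose own minimum forcing distance is $1$.

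Concretely, let $G=P_9$ with vertices $u_1,\dots,u_9$ in order, $B=\{u_1,u_3,u_5,u_9\}$, $0<\alpha<1$ and $0<\beta\le\alpha^2$.
At time $1$: $u_1\to u_2$ and $u_9\to u_8$.
At time $2$: $u_3\to u_4$ (it was waiting for $u_2$) and $u_8\to u_7$, so $w(u_7)=\alpha^2$.
At time $3$: $u_5$ (waiting for $u_4$) and $u_7$ both force $u_6$, so $w(u_6)=\alpha+\alpha^3$.
Every vertex is filled. The forcing trees are $\{u_1,u_2\}$, $\{u_3,u_4\}$ and $\{u_5,\dots,u_9\}$; the last has terminus $u_6$ and sources at distances $1$ and $3$. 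Hence $m^{\ell}(G,B)=1$, yet $w(u_7)=\alpha^2<\alpha$.

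The paper's own proof fails at exactly this point. It asserts that $z_i$ is filled at time $m(T_i)$, which ignores waiting: here $u_6$ is filled at time $3$, not time $1$.

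So your proposal is not missing a lemma; it is missing the realizability check you deferred. Adding a delay at $b_1$ turns your picture into a counterexample, and you should conclude that the statement must be weakened rather than leave it conditional.

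Of your fallbacks, drop the idea that all initial vertices of a tree lie at the same depth. It is already false in Figure~\ref{fig:exa-Zpt}, whose single forcing tree has sources at depths $3$ and $4$, and in the example above (depths $1$ and $3$).

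Your other fallback is the right repair. Set $M(T_i)=\max\{\dist_{T_i}(b,z_i) : b\in B\cap V(T_i)\}$. Your in-tree argument then gives $w(v)\ge\alpha^{\max_i M(T_i)}$ verbatim, because for $v\notin B$ every $b\in R_v$ satisfies $\ora{\dist}(b,v)\le\dist_{T_i}(b,z_i)\le M(T_i)$.
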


\begin{proof}
    Let $\{T_i\}_{i=1}^q$ be the set of forcing trees in $FF(G,B)$.  Denote by $z_i$ the terminus of $T_i$ for $1\leq i\leq q$.  Since $m(T_i)\leq m^{\ell}(G,B)$ and $0<\alpha<1$, we have $\alpha^{m(T_i)} \geq \alpha^{m^{\ell}(G,B)}$, so we need only prove that for each $T_i\in FF(G,B)$ and $v\in T_i$, $w(v) \geq \alpha^{m(T_i)}$.  Suppose to the contrary that $w(v) < \alpha^{m(T_i)}$ for some $v$.  Since $B$ is a transmission forcing set, $\beta\leq w(v)$, and therefore $\beta <\alpha^{m(T_i)}$.  Since $w(z_i) \geq \alpha^{m(T_i)}$, this implies $z_i$ is filled at timestep $m(T_i)$, and hence every vertex, including $v$, is filled at timestep $m(T_i)$.  This implies $w(v) \geq \alpha^{m(T_i)}$, a contradiction.
\end{proof}

\begin{theorem} \label{thm:not-B} 
Let $G$ be a graph with $n$ vertices and $B\subseteq V(G)$. Let $0<\alpha,\beta<1$ and $j$ be such that $\alpha^{j}< \beta \leq \alpha^{j-1}$ with $j>0$.
If $B$ is an $(\alpha,\beta)$-transmission forcing set of $G$, then $n\leq|B|\cdot j$ or $\beta\leq|B| \cdot \alpha^j$. 
\end{theorem}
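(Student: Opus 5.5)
The plan is to work with the forcing forest $\ora{F}=\ora{FF}(G,B)$ and the weight formula of \Cref{ob:u-weight}. For each vertex $v$ define
\[ m(v)=\min_{b\in R_v}\ora{\dist}_{\ora{F}}(b,v), \]
so that $m(b)=0$ for $b\in B$. This is well defined because $B$ is a transmission forcing set, so $R_v\neq\emptyset$ for every $v\notin B$. I will show that if $n>|B|\cdot j$, then some vertex $v$ has $m(v)\ge j$. From that, the second alternative follows at once. By \Cref{ob:u-weight}, each term of $w(v)=\sum_{b\in R_v}\alpha^{\ora{\dist}(b,v)}$ is at most $\alpha^{m(v)}\le \alpha^j$, using $0<\alpha<1$. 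Hence $\beta\le w(v)\le |R_v|\alpha^j\le |B|\alpha^j$, where $\beta\le w(v)$ holds because every vertex is eventually filled.

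The core step is a level-counting argument. For $k\ge 0$ let $M_k=\{v\in V(G): m(v)=k\}$, so $M_0=B$. First observe that vertices of $B$ are never forced, since they are filled at time $0$ and forces go only to unfilled vertices; so $B$ is exactly the set of sources of $\ora{F}$. For $v\notin B$, every directed path from some $b\in B$ to $v$ enters $v$ through an in-neighbor $u$. A shortest such path therefore gives $m(v)=1+\min\{m(u): u\rightarrow v\in\mathcal F(B)\}$. In particular each $v\in M_k$ with $k\ge1$ has an in-neighbor in $M_{k-1}$, and I choose one such in-neighbor $\varphi(v)$.

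Since every vertex has out-degree at most $1$ in $\ora{F}$, a vertex $u$ can be an in-neighbor of at most one vertex. So $\varphi:M_k\to M_{k-1}$ is injective, giving $|M_k|\le|M_{k-1}|\le\cdots\le|M_0|=|B|$. Now suppose every vertex has $m(v)\le j-1$. Then $n=\sum_{k=0}^{j-1}|M_k|\le j|B|$. Thus if $n>|B|\cdot j$, some vertex has $m(v)\ge j$, and the previous paragraph yields $\beta\le|B|\alpha^j$.

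The main point to verify carefully is the recursion for $m(v)$ together with the injectivity of $\varphi$. Both rest on two facts: the definition of $R_v$ via directed paths in $\ora{F}$, and the observation that out-degree is at most one (each filled vertex forces at most once). Neither the hypothesis $\beta\le\alpha^{j-1}$ nor the tree decomposition into termini is needed; only $\alpha<1$ and $j>0$ are used.
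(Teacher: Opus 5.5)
Your proof is correct and follows essentially the paper's argument. The paper uses the same dichotomy: either some vertex has every contributing directed path of length at least $j$, which gives $\beta\le|B|\alpha^j$, or every vertex lies within directed distance $j-1$ of $B$, and a count based on out-degree at most one gives $n\le|B|\cdot j$. Your level sets $M_k$ with the injection $\varphi$ are a more precise version of the paper's remark that each vertex of $B$ starts at most one such path, containing at most $j$ vertices.
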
 
\begin{proof}
Assume $B$ is a transmission forcing set for $(\alpha,\beta)$.  If $B = V(G)$, then $n = |B|\leq |B|\cdot j$ since $j\geq 1$.  Suppose $B \subsetneq V(G)$.  By \Cref{ob:u-weight}, if $R_u = \{y_1, y_2, \ldots, y_k\}$ are the vertices in $B$ that contribute to the weight of a vertex $u$, then $w(u) =\sum_{i=1}^k \alpha^{j_i}$, where $j_i$ is the length of the directed path from $y_i$ to $u$ in $\overrightarrow{FF}(G,B)$.  If there exists $u\in V(G)\setminus B$ such that the corresponding lengths satisfy $j_i\geq j$ for $1\leq i\leq k\leq |B|$, then $\beta \leq w(u)\leq k\alpha^j\leq |B|\cdot \alpha^j$.  Otherwise, for each $u\in V(G)\setminus B$, there exists a directed path from a vertex in $B$ to $u$ of length at most $j-1$.  These paths form an edge cover of $G$.  Since a vertex can force at most one other vertex, each vertex of $B$ is the first vertex of at most one such directed path.  Each path contains at most $j$ vertices, so $n\leq |B|\cdot j$.  
\end{proof}

Although we have seen that the converse of \Cref{cor-t} is false, \Cref{thm:not-B} suggests the following:
 If $\Zab(G) = \Z(G)$ and $\pt^{\ab}(G) =\pt(G)$, is it true that $\Z(G)\alpha^{\pt(G)} \geq \beta$? The answer to this is negative, as shown in the next example, which also highlights the difference between $\pt^{\ab}(G,B)$ and $m^{\ell}(G,B)$ for a given set $B$ of initially filled vertices.

\begin{example} \label{exa:fig-Zpt}
Let $G$ be the graph with initially filled vertices $B$, as shown in Figure~\ref{fig:exa-Zpt}. This example shows that when $\Zab(G)=\Z(G)$, the value $\Z(G) \alpha^{\pt(G)}$ can be less than $\beta$. Let $\alpha=.7$ and $\beta=.3$. Then $B$ is a transmission forcing because the smallest weight is $\alpha^{m^{\ell}(G,B)} = \alpha^3 = .7^3>.3$. However, $\Z(G)=2$ and $\pt(G)=\pt^{\ab}(G)=6$ and $2(.7)^6<.3$.
\end{example}

\begin{figure}[ht]\centering 
\begin{tikzpicture}[scale=.95]

\bvertex (1) at (0,0) [label= below: $1$] {};
\bvertex (2) at (0,1.5) [label= above: $1$] {};
\vertex (3) at (2,0) [label= below: $\alpha$] {};
\vertex (4) at (2,1.5) [label= above: $\alpha$] {};
\vertex (5) at (4,0) [label= below: $\alpha^2$] {};
\vertex (6) at (4,1.5) [label= above: $\alpha^2$] {};
\vertex (7) at (6,0) [label= below: $\alpha^3$] {};
\vertex (8) at (6,1.5) [label= above: $\alpha^3+\alpha^4$] {};

\draw[->,thick] (1) to (3);
\draw[->,thick] (2) to (4);
\draw[->,thick] (3) to (5);
\draw[->,thick] (4) to (6);
\draw[->,thick] (5) to (7);
\draw[->,thick] (6) to (8);
\draw[->,thick] (7) to (8);

\draw[thick] (1) to (2);
\draw[thick] (2) to (3);
\draw[thick] (3) to (4);
\draw[thick] (4) to (5);
\draw[thick] (5) to (6);
\draw[thick] (6) to (7);
\end{tikzpicture}
\caption{The graph $G$ in Example~\ref{exa:fig-Zpt}.}

\label{fig:exa-Zpt}
\end{figure}

The connection between propagation time for standard zero forcing, propagation time for transmission zero forcing, and the minimum weight of a filled vertex is subtle. The next two examples show us that the propagation time also heavily depends on the choice of $\alpha$ and $\beta$.

\begin{example} \label{ex:conj}
    Let $G$ be the graph in \Cref{fig-tree-wait}.  Let $\alpha = 0.45$ and $\beta = 0.21$.  Note that $\Z(G) = \Zab(G) = 4$ and the filled vertices in the figure are a minimum zero forcing and minimum $(\alpha,\beta)$-transmission forcing set with propagation time (for both processes) equal to 3.  Also, the hypothesis of \Cref{thm:not-B} is satisfied with $j=2$; note that $9 > 4\cdot 2$ but $0.21 \leq 4\cdot 0.45^2$.  Moreover, $4\cdot 0.45^3 \geq 0.21$.
\end{example}

\begin{figure}[ht]\centering 
\begin{tikzpicture}[scale=.8]
\vertex (1) at (-.15,0) [label= above: $2\alpha$] {};
\bvertex (8) at (1.2, 0) [label= above: $1$] {};
\vertex (2) at (3,0) [label= above: $\alpha+\alpha^2$] {};
\vertex (3) at (5.3, 0) [label= above: $\alpha^2+\alpha^3$] {};
\vertex (4) at (3, -1) [label= right: $\alpha$] {};
\bvertex (5) at (3, -2) [label= below: $1$] {};
\vertex (6) at (-1, -1) [label= below: $2\alpha^2$] {};
\bvertex (7) at (-1, 1) [label= above: $1$] {};

\bvertex (0) at (-2, 0) [label=below: $1$]  {};

\draw[->,thick] (0) to (1);
\draw[->,thick] (7) to (1);
\draw[<-,thick] (6) to (1);
\draw[-, thick] (1) to (8);
\draw[->, thick] (8) to (2);
\draw[->, thick] (2) to (3);
\draw[->, thick] (5) to (4);
\draw[->, thick] (4) to (2);

\end{tikzpicture}
\caption{The graph $G$ in \Cref{ex:conj}.}

\label{fig-tree-wait}
\end{figure}

\begin{example} \label{ex:conjpart2}
Consider the graph $G$ in \Cref{fig:alpha79beta99}, and let $\alpha = 0.79$ and $\beta = 0.99$.  The set $B_1$ of initially filled vertices, shown in the top graph of the figure, is a minimum zero forcing set such that $\pt(G,B_1) = \pt(G) = 3$.  However, $B_1$ is not a transmission forcing set for the given $(\alpha,\beta)$.  The set $B_2$ of initially filled vertices, shown in the bottom graph of the figure, is a minimum $(\alpha,\beta)$-transmission forcing set, with propagation time 7.  Note that $\Z(G)\alpha^t \geq \beta$ is satisfied with $t=3$ but not with $t=7$.  

As pointed out in \Cref{obs:unique}, in contrast with zero forcing, any minimum transmission forcing set for $(.79,.99)$ must contain the central leaf vertex.  Indeed, if this vertex were not in $B_2$, then its degree-3 neighbor, $v$, would have to force it, which would require $v$ to be forced from both sides, which is impossible since $2\alpha^3 < \beta$.
\end{example}

\begin{figure}[htb]\centering 

\begin{tikzpicture}[scale=.8]
\vertex (1) at (-3,0) [label= above: $\alpha$] {};
\vertex (z) at (-1.5,0) [label= above: $\alpha^2$] {};
\vertex (2) at (.5, 0) [label= above: $\alpha$] {};
\bvertex (3) at (3,0) [label= above: $1$] {};
\vertex (4) at (5.7,0) [label= above: $\alpha$] {};
\vertex (5) at (8.7, 0) [label= above: $\alpha$] {};
\bvertex (u) at (8.7, -1.2) [label= below: $1$] {};
\bvertex (k) at (.5, -1.2) [label= below: $1$] {};
\vertex (y) at (12.2, 0) [label= above: $\alpha^2$] {};
\bvertex (i) at (-3, -1.2) [label= below: $1$] { };
\vertex (h) at (-5, 0) [label= above: $\alpha^2$] {};

\draw[<-,thick] (h) to (1);
\draw[->,thick] (i) to (1);
\draw[-, thick] (1) to (z);
\draw[<-, thick] (z) to (2);
\draw[-, thick] (2) to (3);
\draw[->, thick] (3) to (4);
\draw[->, thick] (k) to (2);
\draw[->, thick] (u) to (5);
\draw[-, thick] (4) to (5);
\draw[->, thick] (5) to (y);

\end{tikzpicture}

\vspace{0.1in}

\begin{tikzpicture}[scale=.8]
\vertex (1) at (-3,0) [label= above: $2\alpha$] {};
\vertex (z) at (-1.5,0) [label= above: $2\alpha^2$] {};
\vertex (2) at (.5, 0) [label= above: $2\alpha^3+\alpha$] {$v$};
\vertex (3) at (3,0) [label= above: $2\alpha^4+\alpha^2$] {};
\vertex (4) at (5.7,0) [label= above: $2\alpha^5+\alpha^3$] {};
\vertex (5) at (8.7, 0) [label= above: $2\alpha^6+\alpha^4+\alpha$] {};
\bvertex (u) at (8.7, -1.2) [label= below: $1$] {};
\bvertex (k) at (.5, -1.2) [label= below: $1$] {};
\vertex (y) at (12.2, 0) [label= above: $2\alpha^7+\alpha^5+\alpha^2$] {};
\bvertex (i) at (-3, -1.2) [label= below: $1$] {};
\bvertex (h) at (-5, 0) [label= below: $1$] {};


\draw[->,thick] (h) to (1);
\draw[->,thick] (i) to (1);
\draw[->, thick] (1) to (z);
\draw[->, thick] (z) to (2);
\draw[->, thick] (2) to (3);
\draw[->, thick] (3) to (4);
\draw[->, thick] (k) to (2);
\draw[->, thick] (u) to (5);
\draw[->, thick] (4) to (5);
\draw[->, thick] (5) to (y);

\end{tikzpicture}
\caption{The graph $G$ in \Cref{ex:conjpart2} with the initially filled sets $B_1$ (top) and $B_2$ (bottom).}

\label{fig:alpha79beta99}
\end{figure}

\section{Graph Families}\label{sec:families}
In this section we consider the transmission zero forcing number of several graph families, drawing on the results for $\Z(G)$ and \Cref{basic}.

\subsection{Complete Graphs and Complete Bipartite Graphs}
If $G$ is a complete graph, then $\Z(G) = n-1$.  Thus the characterization of $\Zab(G)$ is a direct consequence of \Cref{z-is-n}.
\begin{proposition}
    For $n\geq 2$, \[ \Z_{\alpha, \beta}(K_n)=\begin{cases} 
      n-1 & \beta \leq (n-1)\alpha\\
      n & (n-1)\alpha<\beta. \\    
   \end{cases}
\]
\end{proposition}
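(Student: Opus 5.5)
The plan is to sandwich $\Z_{\alpha,\beta}(K_n)$ between $\Z(K_n)=n-1$ and $n$, and then use \Cref{z-is-n} to decide which of the two values occurs. Since $\Delta(K_n)=n-1$, the condition in \Cref{z-is-n} reads $(n-1)\alpha<\beta$, which is exactly the second case of the statement.

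\textbf{Step 1 (bounds).} By \Cref{basic}, $\Z_{\alpha,\beta}(K_n)\geq \Z(K_n)=n-1$. For the upper bound, take $B=V(K_n)$. Every vertex then already has weight $1\geq\beta$, so $B$ is trivially a transmission forcing set and $\Z_{\alpha,\beta}(K_n)\leq n$. (The value $\Z(K_n)=n-1$ is the standard fact: $n-2$ filled vertices leave each filled vertex with two unfilled neighbors, so no force is possible.) Consequently $\Z_{\alpha,\beta}(K_n)\in\{n-1,n\}$.

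\textbf{Step 2 (apply \Cref{z-is-n}).} With $\Delta=n-1$, \Cref{z-is-n} gives $\Z_{\alpha,\beta}(K_n)=n$ if and only if $(n-1)\alpha<\beta$. If $(n-1)\alpha<\beta$, we get $n$. Otherwise $\beta\leq(n-1)\alpha$, so $\Z_{\alpha,\beta}(K_n)\neq n$, and Step 1 forces the value $n-1$.

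For concreteness, the $n-1$ case can also be checked directly. Let $B=V(K_n)\setminus\{u\}$. Each of the $n-1$ vertices of $B$ has $u$ as its unique unfilled neighbor, so all of them force $u$ at time $1$, giving $w_1(u)=(n-1)\alpha\geq\beta$.

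There is no genuine obstacle here. The only point requiring care is Step 1, where $\Z(K_n)=n-1$ together with the trivial upper bound pins the value to two options, so that \Cref{z-is-n} settles the matter. The hypothesis $n\geq 2$ ensures that $K_n$ has an edge, which keeps the use of $\Delta=n-1$ in \Cref{z-is-n} meaningful.
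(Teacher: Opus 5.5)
Your proof is correct and follows the paper's argument: $\Z(K_n)=n-1$ gives $\Zab(K_n)\geq n-1$ by \Cref{basic}, and then \Cref{z-is-n} with $\Delta(K_n)=n-1$ decides between $n-1$ and $n$. You make explicit the sandwich $n-1\leq \Zab(K_n)\leq n$, which the paper's one-line proof leaves implicit.
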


\begin{proof}
For $n\geq 2$, $K_n$ contains an edge and $\Z(K_n) =n-1$. Thus, by \Cref{z-is-n}, $\Zab(K_n)=n$ if and only if $\Delta(G)\alpha <\beta$, and otherwise, $\Zab(G)=n-1$.
\end{proof}

\begin{proposition} \label{prop:star}
    For $n\geq 3$, \[ \Z_{\alpha, \beta}(K_{1,n-1})=\begin{cases} 
      n-2 & \beta \leq (n-2)\alpha^2\\
      n-1 & (n-2)\alpha^2 < \beta \leq (n-1)\alpha\\
      n & (n-1)\alpha<\beta. \\  
   \end{cases}
\]
\end{proposition}

\begin{proof}
For $n\geq 3$, $K_{1,n-1}$ contains at least 2 edges and $\Delta(K_{1,n-1})=n-1$.  It is well-known that $\Z(K_{1,n-1})=n-2$ and any minimum zero forcing set consists of (any) $n-2$ vertices of degree 1.  Therefore, if $\Zab(K_{1,n-1})=n-2$, the transmission forcing set must be a zero forcing set. After coloring is applied to this set, the weights of the initially unfilled vertices are $(n-2)\alpha$ and $(n-2)\alpha^2$.  Thus $\Zab(K_{1,n-1})=n-2$ if and only if $(n-2)\alpha^2 \geq \beta$. By \Cref{z-is-n}, $\Zab(K_{1,n-1})=n$ if and only if $(n-1)\alpha <\beta$, which also determines the remaining case.  
\end{proof}

\begin{proposition} 
For $n\geq m\geq 2$,
\[ \Z_{\alpha, \beta}(K_{m,n})=\begin{cases} 
      m+n-2 &  \beta\leq (m-1)\alpha \\ m+n-2& (m-1)\alpha<\beta\leq (n-1)\alpha \mbox{ and } \beta \leq (m-1)\alpha+ (n-1)\alpha^2\\
      m+n-1 & (n-1)\alpha<\beta\leq n\alpha \mbox{ or } (m-1)\alpha +(n-1)\alpha^2< \beta \leq n\alpha\\ 
      m+n & n\alpha < \beta. \\  
   \end{cases}
\]
\end{proposition}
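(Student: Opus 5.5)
The plan is to pin down exactly when the value $m+n-2$ is attained, and then read off the remaining cases from \Cref{z-is-n}. Write the parts as $M$ and $N$ with $|M|=m\le n=|N|$, so that $\Delta(K_{m,n})=n$. By \Cref{basic}, $\Zab(K_{m,n})\ge \Z(K_{m,n})=m+n-2$. Therefore $\Zab(K_{m,n})=m+n-2$ exactly when some $(m+n-2)$-subset is a transmission forcing set.

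First I will classify the $(m+n-2)$-subsets $B$ whose process can start. If both vertices of $V\setminus B$ lie in the same part, then every vertex of the other part has two unfilled neighbors. Every vertex of that same part has no unfilled neighbors. So no force is ever possible, and $B$ fails. Hence, by \Cref{basic} part \ref{basic1}, the only candidates are $B=V\setminus\{x,y\}$ with $x\in M$ and $y\in N$.

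Next I trace the process for such a $B$. At time $1$, each of the $m-1$ vertices of $M\setminus\{x\}$ has $y$ as its unique unfilled neighbor. Likewise each of the $n-1$ vertices of $N\setminus\{y\}$ has $x$ as its unique unfilled neighbor. This gives $w_1(y)=(m-1)\alpha$ and $w_1(x)=(n-1)\alpha$.

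After time $1$ the only possible new forcers are $x$ and $y$ themselves, so I split into three cases.
\begin{enumerate}
\item If $(m-1)\alpha\ge\beta$, then both vertices are filled at time $1$.
\item If neither vertex is filled at time $1$, the process stalls.
\item Since $m\le n$, the remaining case is that $x$ is filled but $y$ is not, i.e.\ $(m-1)\alpha<\beta\le(n-1)\alpha$. Then $x$ now has $y$ as its unique unfilled neighbor, and $x\to y$ gives $w_2(y)=(m-1)\alpha+(n-1)\alpha^2$. After this no further forces are possible, so $B$ succeeds iff $\beta\le(m-1)\alpha+(n-1)\alpha^2$.
\end{enumerate}
This yields the first two lines of the formula.

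For the remaining cases I use \Cref{z-is-n} with $\Delta=n$: $\Zab(K_{m,n})=m+n$ iff $n\alpha<\beta$, which is the last line. Otherwise the value is at most $m+n-1$, and by the above it is at least $m+n-1$ whenever the first two conditions fail. It remains to check that the negation of the first two lines, intersected with $\beta\le n\alpha$, is exactly the stated third line. The negation reads: $\beta>(m-1)\alpha$, and either $\beta>(n-1)\alpha$ or $\beta>(m-1)\alpha+(n-1)\alpha^2$. In the third line the condition $\beta>(m-1)\alpha$ can be dropped, because each disjunct already implies it, using $n\ge m$.

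The main point requiring care is the stalling claim in the trace: no forcing can resume after time $1$ or $2$ except through $x$ or $y$. This holds because every vertex of $B$ keeps at least one unfilled neighbor until both $x$ and $y$ are filled, so I expect the argument to go through.
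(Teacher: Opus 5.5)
Your proof is correct and follows essentially the same route as the paper: restrict to the sets missing one vertex $x\in M$ and one vertex $y\in N$, compute $w_1(x)=(n-1)\alpha$ and $w_1(y)=(m-1)\alpha$, handle the single extra force $x\to y$ giving $(m-1)\alpha+(n-1)\alpha^2$, and read off the remaining cases from \Cref{z-is-n} by negation. You additionally verify the shape of the $(m+n-2)$-sets and the Boolean simplification of the third line, which the paper only asserts. One small wording fix: nothing other than $x$ and $y$ can force after time $1$ because each vertex of $B$ is adjacent to exactly one of $x,y$ and already used its single force on it at time $1$, not because it ``keeps an unfilled neighbor.''
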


\begin{proof}
For $n\geq m\geq 2$,  $\Z(K_{m,n})=m+n-2$ \cite{AIM2008}.  By \Cref{z-is-n}, $\Zab(K_{m,n})=m+n$ if and only if $n\alpha<\beta$. Note that any minimum zero forcing set $B$ consists of $m-1$ vertices of degree $n$ and $n-1$ vertices of degree $m$. Let $x,y \in V(G)\setminus B$ such that $\deg(x)=n$ and $\deg(y)=m$. After applying one coloring step to this set, the weights of the initially unfilled vertices are $w_1(x)=(n-1)\alpha$ and $w_1(y)=(m-1)\alpha$. Since $(m-1)\alpha \leq (n-1)\alpha$, we have that if $\beta \leq (m-1)\alpha$, then $\Zab(K_{m,n})=m+n-2$. 

If $(m-1)\alpha< \beta \leq (n-1)\alpha$, then $x\rightarrow y$ is valid at the second time-step, so
$w_2(y)=(m-1)\alpha+(n-1)\alpha^2$, and $y$ is filled if $(m-1)\alpha +(n-1)\alpha^2\geq \beta$.  These are the only situations where $\Zab(K_{m,n})=m+n-2$ since if $(n-1)\alpha<\beta$, neither $x$ nor $y$ is filled, and the transmission forcing process cannot continue. 
The remaining case is then determined by the negations of the conditions for $\Zab(K_{m,n})$ to be $m+n-2$ or $m+n$.
\end{proof} 

\begin{example}
    Let $G= K_{3,2}$.  Then $\Z_{(\frac{1}{2},\frac{1}{2})}(G) = 3$, $\Z_{(\frac{3}{10},\frac{1}{2})}(G)=4$, and $\Z_{(\frac{1}{10},\frac{1}{2})}(G) = 5$.
\end{example}
\subsection{Paths and Cycles}\label{paths}
If $G$ is a path or cycle, then $\Delta(G)=2$.  Thus, the forcing forest for any transmission forcing set of $G$ is composed of paths, while each component of the directed forcing forest can be the union of two directed paths with the same terminus. This simplifies the computation of the transmission zero forcing number for these graphs.

For any integers $k$ and $j$, let $k_{(j)}=k\;(\bmod{\;j})$, with $0\leq k_{(j)}\leq j-1$. If $B \subseteq V(G)$, use $\min w_{\ab}(B,G)$ (or simply $\min w(B,G)$) to denote the minimum weight among all vertices of $G$ after the forcing process initiated by $B$ has terminated.  Clearly $B$ is a $\ab$-transmission forcing set if and only if $\beta \leq \min w(B,G)$.

\begin{theorem} \label{thm:paths} Let $0< \alpha <1$ and let $j$ be such that $\alpha^{j}< \beta \leq \alpha^{j-1}.$ If $2\alpha^{j}<\beta$,  then $\Zab(P_n)=\lceil \frac{n}{j}\rceil $, and if $\beta \leq 2\alpha^j$, then 
$\Zab(P_n)= \lceil \frac{2n}{2j+1}\rceil $. Furthermore, for $n>j$, there is a minimum transmission forcing set that includes both endpoints of $P_n$.
\end{theorem}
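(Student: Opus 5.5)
The plan is to reduce everything to the structure of the forcing forest $FF(P_n,B)$, using \Cref{forcing_forest}, \Cref{terminus} and \Cref{ob:u-weight}. Then count how many vertices a single initially filled vertex can "cover", which gives a lower bound. A matching partition of $P_n$ into blocks gives the upper bound.

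\textbf{Step 1: structure of the forcing trees.} Let $B$ be an $(\alpha,\beta)$-transmission forcing set of $P_n$. Each forcing tree $T$ is a subgraph of a path, hence a subpath. Its directed version is an in-tree rooted at the terminus $z$ (\Cref{terminus}). The vertices of $B$ in $T$ are sources of $\ora{T}$, and in a directed path oriented toward an interior or end root the only sources are the ends of the subpath. Hence $T$ contains at most two vertices of $B$, and there are two cases.
\begin{enumerate}[(a)]
\item $T$ has a single vertex of $B$ at one end, forcing along the subpath toward $z$ at the other end.
\item $T$ has $B$-vertices at both ends, forcing inward to an interior terminus $z$.
\end{enumerate}
The case $|T|=1$, i.e. $T=\{b\}$ with $b\in B$, falls under (a).

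By \Cref{ob:u-weight}, every non-terminus vertex at directed distance $d$ from its source has weight exactly $\alpha^d$. Since $\alpha^j<\beta\le\alpha^{j-1}$, this forces $d\le j-1$. The terminus in case (b), at distances $a,b$ from the two ends, has weight $\alpha^a+\alpha^b$.

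So a type (a) tree has at most $j$ vertices. In a type (b) tree, each side has at most $j-1$ non-terminus vertices beyond its source. The terminus can lie at distance $j$ from both sources only if $2\alpha^j\ge\beta$. If it lies at distance $\le j-1$ from one side, the tree has at most $2j$ vertices. Therefore a type (b) tree has at most $2j+1$ vertices when $\beta\le 2\alpha^j$, and at most $2j$ vertices when $2\alpha^j<\beta$.

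\textbf{Step 2: lower bound.} The trees partition $V(P_n)$. In either regime, each vertex of $B$ accounts for at most $j$ vertices, or for at most $(2j+1)/2$ vertices in the case $\beta\le 2\alpha^j$. This gives $|B|\ge \lceil n/j\rceil$ and $|B|\ge\lceil 2n/(2j+1)\rceil$ respectively.

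\textbf{Step 3: upper bound and endpoints.} Cut $P_n$ into consecutive blocks and put $B$ at the ends of the blocks.
\begin{itemize}
\item If $2\alpha^j<\beta$: use blocks of $j$ vertices, each with its $B$-vertex at the end facing away from the next block. The first block's $B$-vertex is the path endpoint. For the last block, orient it so its $B$-vertex is the other endpoint; the shorter remainder block, if any, can be placed anywhere.
\item If $\beta\le2\alpha^j$: write $n=q(2j+1)+r$. Use $q$ blocks of $2j+1$ vertices with $B$ at both ends and central terminus. If $1\le r\le j$, add one block of $r$ vertices with a single $B$-vertex at a path endpoint. If $r>j$, add one block of $r$ vertices with two $B$-vertices. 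Then $2q+\lceil 2r/(2j+1)\rceil=\lceil 2n/(2j+1)\rceil$, and both path endpoints lie in $B$ when $n>j$.
\end{itemize}

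\textbf{Main obstacle: validity of these blocks as forcing sets.} Adjacent blocks meet at two $B$-vertices (or a $B$-vertex and a block end that becomes filled). I must check that each $B$-vertex's unique unfilled neighbor lies inside its own block, so no cross-block forces occur and each block evolves independently. Within a block, the forces along each side proceed one per step, and each vertex's weight is the predicted $\alpha^d\ge\alpha^{j-1}\ge\beta$. At the terminus, contributions may arrive at different times, but weights are additive, so the terminus eventually reaches $\alpha^a+\alpha^b\ge\beta$. Conversely, in Step 1 I need that no vertex outside the claimed pattern receives extra weight. This holds because, on a path, a non-$B$ vertex is forced by at most two neighbors, and only the terminus has in-degree two.
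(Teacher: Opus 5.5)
Your proof follows the paper's route and is essentially correct. For the lower bound, forcing trees are subpaths with at most two sources, and the weights $\alpha^d$ force $d\le j-1$, except possibly at a terminus fed from both sides. Your per-tree count (at most $j$ vertices, or $2j$ resp. $2j+1$) is a cleaner version of the paper's global count $n\le j|B|$, resp. $n\le (j+1/2)|B|$. Your block construction for $\beta\le 2\alpha^j$ is exactly the paper's set $S$, and there your independence check does go through.

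\textbf{The gap.} The step that fails is the validity check for your construction when $2\alpha^j<\beta$. You claim each $B$-vertex's unique unfilled neighbor lies in its own block, so no cross-block forces occur. But once the last block is flipped, its non-$B$ end is adjacent to the non-$B$ end of the previous block, and whichever is filled first forces the other. For example, take $j=2$, $n=6$, blocks $\{v_0,v_1\},\{v_2,v_3\},\{v_4,v_5\}$, so $B=\{v_0,v_2,v_5\}$. At time $1$ the forces are $v_0\to v_1$ and $v_5\to v_4$. At time $2$ both $v_2\to v_3$ and $v_4\to v_3$ occur, so $w(v_3)=\alpha+\alpha^2$ via a cross-block force. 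Similarly, a size-one remainder block placed just before a block whose $B$-vertex is at its near end has the previous block's terminus as its only unfilled neighbor, and forces it at time $1$. So the blocks do not evolve independently, the actual weights differ from your predicted ones, and the argument as planned does not show the set is a forcing set.

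\textbf{How to repair it.} The set is still a transmission forcing set; you need an argument that is insensitive to timing and to extra incoming forces. On a path, look at a maximal run of non-$B$ vertices between two $B$-vertices. Once a bounding $B$-vertex is active (its other neighbor is filled or absent), every vertex of the run within distance $j-1$ of it is eventually filled. Either its own front reaches that vertex with weight at least $\alpha^{j-1}\ge\beta$, or the run is completed earlier from the other side. Hence a run of length at most $j-1$ is filled from one active side, and a run of length at most $2j-2$ is filled once both sides are active. In your set, every run except the last has length at most $j-1$. These runs fill from left to right, each activating the next $B$-vertex. The last run lies between the $B$-vertex of the block before the last and $v_{n-1}$ (active from the start), and has length at most $2j-2$. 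Alternatively, do as the paper does: orient all blocks the same way and then move the last $B$-vertex to $v_{n-1}$. Then only the final segment, of at most $2j$ vertices, has sources at both ends, and the two-sided run argument applies to it directly.
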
 

\begin{proof} Let $V(P_n)=\{v_0, v_1, \ldots, v_{n-1}\}$ and $E(P_n) = \{ v_iv_{i+1} \ | \ i \in \{ 0,1, \dots, n-2\} \}$. We will first show that there exists a transmission forcing set of the appropriate size, and then show that the size is the minimum.

For the first case, suppose $2\alpha^{j} < \beta$. Let $S=\{v_i\;|\;i\equiv 0 \;  (\bmod{\;j})\}$. Then $|S|=\lceil \frac{n}{j}\rceil$ and when using $S$ as the initial set of filled vertices, each vertex receives a weight of at least $\alpha^{j-1}$ in the forcing process. It follows that $S$ is a transmission forcing set and $\Zab(P_n) \leq \lceil \frac{n}{j}\rceil$. Observe that for the set $S$ constructed, the largest index of an initially filled vertex is $\lfloor \frac{n-1}{j} \rfloor \cdot j$.  If $S^{\prime} = S \setminus \{ v_{\lfloor \frac{n-1}{j} \rfloor \cdot j} \} \cup \{ v_{n-1}\}$, then $\min w(S^{\prime},P_n) \geq \min w(S,P_n)$, so $S'$ is a transmission forcing set that includes both endpoints of $P_n$ if $n>j$, and $|S'|=|S|$. 

For the second case, assume $2\alpha^{j}\geq \beta$. If $n_{(2j+1)} > j$, let  $S=\{v_i\;|\;i\equiv 0,2j \;  (\bmod{\;2j+1}), \ i \leq \lfloor \frac{n}{2j+1} \rfloor\cdot (2j+1) \}\cup \{v_{n-1}\}$.  If $n_{(2j+1)}\leq j$, let $S=\{v_i\;|\;i\equiv 0,2j \;  (\bmod{\;2j+1}), \ i < \lfloor \frac{n}{2j+1} \rfloor\cdot (2j+1) \}\cup \{v_{n-1}\}$.
It is straightforward to check that $|S|=\lceil \frac{2n}{2j+1}\rceil$. The graph $FF(P_n,S)$ is $\lfloor \frac{n}{2j+1} \rfloor P_{2j+1} \sqcup P_{n_{(2j+1)}}$.   In either case, $\min w(S,P_n) = \min \{\alpha^{j-1}, 2\alpha^j\} \geq \beta$. Thus, $S$ is a  transmission forcing set and $\Zab(P_n)\leq \lceil \frac{2n}{2j+1}\rceil$. 

 For the reverse inequality, let $B$ be the filled vertices at time step 0 in an $(\alpha,\beta)$-forcing set. Since $FF(P_n,B)$ is a subgraph of $P_n$, every component of $FF(P_n,B)$ is a path. By Proposition~\ref{prop:leaves}, for any $u\not \in B$, we have $|R_u|\leq 2$. By Observation~\ref{ob:u-weight}(1), for $u\not\in B$, we have $w(u)=\alpha^i$ for some $i$, $1\leq i\leq j-1$ with $|R_u|=1$, or $w(u) = \alpha^{i_1}+\alpha^{i_2}$ where $\alpha^{i_1}+\alpha^{i_2}\geq \beta$ and $|R_u|=2$. First, assume that $2\alpha^j < \beta$. If both $i_1,i_2\geq j$, then $\alpha^{i_1}+\alpha^{i_2}\leq 2\alpha^j < \beta$, hence at least one of $i_1, i_2\leq j-1$. Thus, for every vertex $u\not\in B$, there is a vertex $v_u\in B$ for which there is a path from $v_u$ to $u$ in $\overrightarrow{FF}(P_n,B)$ of length less than or equal to $j-1$, with $j$ or fewer vertices. Hence the total number of vertices in $P_n$ is less than or equal to $j\cdot |B|$. Thus $|B|\geq \lceil \frac{n}{j}\rceil$. 
Second, assume that $2\alpha^j \geq\beta$. In addition to the values described above, $w(u)$ could equal $2\alpha^j$, with $|R_u|=2$ and two directed paths of length $j$ ending at $u$ in $\overrightarrow{FF}(P_n,B)$. Hence the total number of vertices in $P_n$ is less than or equal to $j\cdot |B| + \frac{|B|}{2}$, and we have $|B|\geq \lceil \frac{n}{j+\frac{1}{2}}\rceil=\lceil\frac{2n}{2j+1} \rceil$. 
\end{proof}

\begin{theorem} \label{thm:cycles} Let $0< \alpha <1$ and let $j$ be such that $\alpha^{j}< \beta \leq \alpha^{j-1}.$
If $n>j$ and $2\alpha^{j} < \beta$,  then $\Zab(C_n)=\lceil \frac{n}{j}\rceil $, and if $n>j$ and $\beta\leq 2\alpha^{j}$, then 
$\Zab(C_n)= \lceil \frac{2n}{2j+1}\rceil $.

If $n\leq j$, then $\Zab(C_n)=2$.
\end{theorem}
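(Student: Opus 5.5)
We will mirror the proof of \Cref{thm:paths}, giving an upper bound by construction and a matching lower bound by counting vertices along directed forcing paths. First dispose of the case $n\le j$. Since $\Z(C_n)=2$, \Cref{basic} gives $\Zab(C_n)\ge 2$. For the upper bound, take two adjacent vertices $v_0,v_{n-1}$ as $B$. Each forces its other neighbor at time 1, and the forcing continues around the cycle in both directions. Every vertex receives weight from at least one path of length at most $\lceil (n-2)/2\rceil+\cdots$; more crudely, at most $n-2\le j-1$ from $v_0$. So every weight is at least $\alpha^{j-1}\ge\beta$. (When the two directions meet at a single vertex, that vertex only gains additional weight, which does no harm.) Hence $\Zab(C_n)=2$.

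Now let $n>j$. For the upper bound, label $V(C_n)=\{v_0,\dots,v_{n-1}\}$ and reuse the sets $S$ constructed in \Cref{thm:paths} for the path $v_0v_1\cdots v_{n-1}$. Since $n>j$, that theorem guarantees that $S$ can be chosen to contain both endpoints $v_0$ and $v_{n-1}$, which are adjacent in $C_n$. Also $|S|\ge 2$, because $\lceil n/j\rceil\ge 2$, and $\lceil 2n/(2j+1)\rceil\ge2$ whenever $n>j$.

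The key check is that the cycle process from $S$ coincides with the path process from $S$. The edge $v_0v_{n-1}$ joins two vertices that are both filled at time $0$. Consequently, for every vertex the set of unfilled neighbors is identical in $C_n$ and in $P_n$ at every time step. The forces, and hence the weights, are therefore the same. It follows that $\min w(S,C_n)=\min w(S,P_n)\ge\beta$, and the upper bounds of \Cref{thm:paths} transfer to $C_n$.

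For the lower bound, let $B$ be any transmission forcing set of $C_n$. Since $\Delta=2$, each forcing tree is a path, and by \Cref{terminus} its directed form consists of at most two directed paths ending at a common terminus. Hence $|R_u|\le 2$ for every $u$, and we can repeat the counting argument of \Cref{thm:paths} verbatim.

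Suppose $2\alpha^j<\beta$. Then every $u\notin B$ lies within directed distance $j-1$ of some $b\in B$ along the unique out-path starting at $b$. Each such path covers at most $j$ vertices, so $n\le j|B|$.

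Suppose instead $\beta\le2\alpha^j$. The only extra possibility is a terminus receiving $2\alpha^j$ from two paths of length exactly $j$. Such a terminus is shared by two members of $B$, so it contributes at most $|B|/2$ extra vertices, giving $n\le (j+\tfrac12)|B|$.

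The one point needing care, and the expected obstacle, is that $FF(C_n,B)$ need not be a proper subgraph that is a path. I must show it cannot be all of $C_n$: it cannot, since \Cref{forcing_forest} shows that $FF(C_n,B)$ is a forest. With that, the path-based counting applies unchanged and yields the stated values.
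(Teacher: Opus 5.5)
Your proof is correct. The upper bound for $n>j$ is exactly the paper's argument: take a minimum path set containing both endpoints, and note that the added edge joins two initially filled vertices, so the process is unchanged.

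The routes differ in the lower bound and in the case $n\le j$.

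\textbf{Lower bound.} The paper does no new counting. It uses Observation~\ref{basic}(1) together with the fact that every zero forcing set of $C_n$ contains two adjacent vertices, say $v_0,v_{n-1}$. Since these cannot force each other, deleting $v_0v_{n-1}$ leaves the process unchanged. So $B$ is a transmission forcing set of $P_n$, and $\Zab(C_n)\ge\Zab(P_n)$ follows from \Cref{thm:paths} as a black box.

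You instead rerun the counting directly on $C_n$. You use only that $FF(C_n,B)$ is a forest (\Cref{forcing_forest}) in a graph of maximum degree $2$, so each forcing tree is a path and $|R_u|\le 2$. This avoids the zero-forcing fact about adjacent pairs, and in effect shows the counting bound holds for any graph whose forcing trees are paths. The cost is that you must re-verify details the paper inherits for free. In particular, to justify ``the only extra possibility'' you should state why no terminus can receive, say, $\alpha^j+\alpha^{j+1}$. The reason is that a non-terminus vertex on one side of a forcing path has a single source, so its weight is $\alpha^d$. It can force only if $\alpha^d\ge\beta$, i.e.\ $d\le j-1$. Hence the terminus is within distance $j$ of each of its sources.

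\textbf{Case $n\le j$.} The paper cites $\pt(C_n)=\lceil\frac{n-2}{2}\rceil$ and applies \Cref{cor-t}. You compute weights directly from an adjacent pair, which is self-contained. One imprecision: vertices near $v_{n-1}$ receive their weight from $v_{n-1}$, not from $v_0$. The bound $\alpha^{n-2}\ge\alpha^{j-1}\ge\beta$ still holds with distance measured from whichever source forces them. Since every forced vertex is immediately filled, neither chain stalls.
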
 
\begin{proof}

Suppose first that $n>j$.  Recall from \Cref{basic} that any transmission forcing set of a graph must contain a zero forcing set.  It is straightforward to see that for the cycle $C_n$ on $n\geq 3$ vertices, a zero forcing set must contain a pair of adjacent vertices.  Thus every minimum transmission forcing set $B$ of $C_n$ contains a pair of adjacent vertices that we may take, without loss of generality, to be $\{v_0,v_{n-1}\}$. Since these vertices cannot contribute to the weight of one another, we have $FF(C_n,B) \cong FF(C_n-v_0v_{n-1},B)$.  Then $B$ is a transmission forcing set of $C_n-v_0v_{n-1} \cong P_n$ and $\Zab(C_n) \geq \Zab(P_n)$. 

Now suppose $B$ is a minimum transmission forcing set of $P_n$.  \Cref{thm:paths} allows us to replace $B$ with a transmission forcing set of the same size containing both endpoints $\{v_0,v_{n-1}\}$ of $P_n$. Adding the edge $v_0v_{n-1}$ does not change the forcing forest, and thus $B$ is a transmission forcing set of $(V(P_n), E(P_n)\cup \{v_0v_{n-1}\}) \cong C_n$.  Therefore $\Zab(P_n) \geq \Zab(C_n)$.

Now suppose $n\leq j$.  Since $\pt(C_n) = \lceil\frac{n-2}{2}\rceil < n$ \cite{chilakamarri}, $\Zab(C_n)=\Z(C_n)=2 $ follows from \Cref{cor-t}. 
\end{proof}

\section{Effect of Graph Minor Operations}\label{sec:vert_edge_del}

It is useful to understand the behavior of graph parameters under graph minor operations, i.e.,  vertex deletion, edge deletion, and edge contraction, and in this section we study the effect of these operations on the transmission zero forcing number. We begin by summarizing the analogous results for zero forcing.

\begin{theorem}[\cite{edholm2012},\cite{owens}] 
    Let $G$ be a graph, $e$ an edge of $G$, and $v$ a vertex of $G$.  
    \begin{enumerate}
        \item $-1 \leq \Z(G) -\Z(G-v) \leq 1$,
        \item $-1 \leq \Z(G) -\Z(G-e) \leq 1$, 
        \item $-1 \leq \Z(G) - \Z(G\slash e)\leq 1$,
        
    \end{enumerate}
    and all bounds are sharp.
\end{theorem}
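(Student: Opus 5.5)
The plan is to argue purely with standard zero forcing, treating each operation in both directions. The tool throughout is the \emph{chronological list of forces}: take a zero forcing set together with a valid ordered sequence of forces, and check that a modified set still admits essentially the same sequence in the modified graph. For each inequality I will start from a minimum zero forcing set of one graph and build a zero forcing set of the other with at most one extra vertex. For sharpness I will exhibit small examples.

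\emph{Vertex deletion.} For $\Z(G)\le \Z(G-v)+1$, let $S$ be a zero forcing set of $G-v$ and use $S\cup\{v\}$ in $G$. Since $v$ is filled from the start, every force $w\to x$ of $G-v$ remains valid in $G$: the only new neighbor of $w$ is $v$, which is filled. For $\Z(G-v)\le \Z(G)+1$, let $B$ be a zero forcing set of $G$ with a fixed chronological list. If $v$ performs a force $v\to y$, take $B'=(B\setminus\{v\})\cup\{y\}$; otherwise take $B'=B\setminus\{v\}$. Every other force $w\to x$ of $G$, with $x\neq v$, is still valid in $G-v$ at the same point, because the neighbors of $w$ in $G-v$ form a subset of those in $G$. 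The force into $v$ is simply dropped. Hence $|B'|\le|B|+1$.

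\emph{Edge deletion}, with $e=uv$. For $\Z(G-e)\le \Z(G)+1$, let $B$ be a zero forcing set of $G$. If some force uses $e$, say $u\to v$, add $v$ to $B$; all remaining forces are still valid in $G-e$, since neighborhoods only shrink. For $\Z(G)\le \Z(G-e)+1$, let $B$ be a zero forcing set of $G-e$ and let $u$ be the endpoint that is filled no later than $v$ in the $G-e$ process. Put $B'=B\cup\{v\}$ and replay the same list of forces in $G$.
\begin{itemize}
\item A force by $u$ gains the extra neighbor $v$, which is already filled.
\item A force by $v$ at step $s$ of the $G-e$ process happens when $v$ is already filled there, so $u$ is filled by then as well.
\end{itemize}
All other forces are unaffected, so $B'$ is a zero forcing set of $G$.

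\emph{Edge contraction.} This is the step I expect to be the main obstacle, since the merged vertex $w$ of $G/e$ has neighborhood $N(u)\cup N(v)\setminus\{u,v\}$, and one force by $w$ must be simulated by forces of $u$ and $v$ (or conversely). I would first try the following. From a zero forcing set $S$ of $G/e$, form a set in $G$: if $w\in S$, replace $w$ by both $u$ and $v$; if $w\notin S$, add to $S$ whichever of $u,v$ is not the endpoint receiving the force into $w$. Then check, case by case, who forces $w$ and whom $w$ forces, that the list lifts to $G$. Going from a zero forcing set $B$ of $G$ to $G/e$, I would map $B$ by $u,v\mapsto w$, add one vertex to repair the single force that may break (the one whose target set gains $w$ in place of two vertices), and then verify the list by the same neighborhood-inclusion argument as above.

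\emph{Sharpness.} For vertex deletion, use the center of $K_{1,3}$ (giving $2$ versus $3$) and a vertex of a pendant-free graph. For edge deletion, use a middle edge of $P_n$ (giving $1$ versus $2$) and an edge of $K_4$ (giving $3$ versus $2$). For contraction, use an edge of $K_4$ (giving $3$ versus $2$), and search small graphs for the $-1$ case.
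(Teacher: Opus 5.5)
The paper gives no proof of this theorem; it only cites Edholm et al.\ and Owens. Your vertex- and edge-deletion arguments are correct. In particular, adding the \emph{later}-filled endpoint of $e$ is the right choice for $\Z(G)\le \Z(G-e)+1$. Your lift from $G/e$ to $G$, giving $\Z(G)\le \Z(G/e)+1$, also goes through.

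\textbf{The gap: $\Z(G/e)\le \Z(G)+1$.} You leave this inequality as an unverified plan, and the plan misdiagnoses what breaks. Write $w$ for the merged vertex. Neighborhood inclusion runs the wrong way for $w$: $N_{G/e}(w)=(N(u)\cup N(v))\setminus\{u,v\}$ contains both $N(u)\setminus\{v\}$ and $N(v)\setminus\{u\}$. This causes two failures.
\begin{itemize}
\item A force $u\to a$ in $G$ is generally not a valid force $w\to a$ at that moment. Some neighbor of $v$ may still be unfilled, possibly one that is only filled downstream of $a$.
\item If both $u\to a$ and $v\to b$ occur, $w$ can perform only one of them.
\end{itemize}
So the forces needing repair are those \emph{performed by} $u$ and $v$, not one ``whose target set gains $w$.'' You also do not say how $w$ becomes filled when $u,v\notin B$.

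\textbf{How to close it.}
\begin{itemize}
\item Suppose $e$ itself is used in a force, say $u\to v$. Then $B$ with $u\mapsto w$ already works, at no extra cost. A force $c\to u$ lifts to $c\to w$, because $c$ cannot be adjacent to the still-unfilled $v$. When $v\to b$ occurs, all of $N(u)\setminus\{v\}$ and $N(v)\setminus\{b\}$ are filled, so $w\to b$ is valid.
\item Otherwise, regard $w$ as filled once the first of $u,v$ is filled. If neither lies in $B$, the force $c\to u$ into the first endpoint lifts to $c\to w$ for the same reason. No earlier force has $u$ or $v$ as a neighbor, and the force into the second endpoint is simply dropped.
\item Let $w$ take over the chronologically later of the forces performed by $u$ and $v$. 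At that moment every vertex of $N(u)\cup N(v)$ other than its target is filled, so the force is valid.
\item Put the target of the earlier force into the set (or the target of the only force, if just one of $u,v$ forces).
\item Replace $B\cap\{u,v\}$ by $w$ when it is nonempty. With this, the cost is at most one vertex in every case.
\item Every other force $z\to x$ survives. Whichever of $u,v$ lie in $N_G(z)$ are filled at that time, hence so is $w$.
\end{itemize}

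\textbf{Sharpness is incomplete.} Sharpness is part of the statement.
\begin{itemize}
\item For $\Z(G)-\Z(G/e)=-1$ you only propose a search. The double star with adjacent centers $u,v$, each carrying two leaves, has $\Z=2$. Contracting $uv$ yields $K_{1,4}$, which has $\Z=3$.
\item ``A vertex of a pendant-free graph'' is not a valid witness for $\Z(G)-\Z(G-v)=1$. Deleting a degree-2 vertex of $K_4-e$ leaves $K_3$, with no change in $\Z$. Use a vertex of $C_n$, or of $K_n$ with $n\ge 3$, instead.
\end{itemize}
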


Thus, $\Z(G)$ is not a minor monotone graph parameter. A useful variant of $\Z(G)$ exhibiting minor monotonicity has been studied in, for example, \cite[Section 2.F]{parameters2012}. Graph minor operations have a greater impact on the transmission zero forcing number because they can interrupt the transfer of high weights among the vertices. 
To establish our results, we will make use of variants of graphs often referred to as broom graphs, which we now define.

\begin{definition} \label{defn:star-path} \rm Let $G(\ell,k)$  be the graph constructed by identifying a degree-1 vertex of the path $P_{k+1}$ with the central vertex of the star $K_{1,\ell}$.  Call the identified vertex $v$ and its neighbor on the path $x$. The graph is illustrated in Figure~\ref{exa:star-path}. 
\end{definition}

\begin{figure}[ht]
\centering
\begin{tikzpicture}[scale=.7]
\draw[thick] (0,1.75)--(2,3)--(6.3,3);
\draw[thick] (0,4.25)--(2,3);
\draw[thick] (0,1)--(2,3)--(0,5);
\draw[thick] (7.7,3)--(9,3);

\filldraw[blue]

(0,1.75) circle [radius=3.5pt]
(0,1) circle [radius=3.5pt]
(0,5) circle [radius=3.5pt]
(0,4.25) circle [radius=3.5pt]
;
\filldraw[black]
(2,3) circle [radius=3.5pt]
(3,3) circle [radius=3.5pt]
(4,3) circle [radius=3.5pt]
(5,3) circle [radius=3.5pt]
(6,3) circle [radius=3.5pt]

(8,3) circle [radius=3.5pt]
(9,3) circle [radius=3.5pt]

(0,3.3) circle [radius=.7pt]
(0,3) circle [radius=.7pt]
(0,2.7) circle [radius=.7pt]
(6.7,3) circle [radius=.7pt]
(7,3) circle [radius=.7pt]
(7.3,3) circle [radius=.7pt]
;

\filldraw[white]

(2,3) circle [radius=2.5pt]
(3,3) circle [radius=2.5pt]
(4,3) circle [radius=2.5pt]
(5,3) circle [radius=2.5pt]
(6,3) circle [radius=2.5pt]

(8,3) circle [radius=2.5pt]
(9,3) circle [radius=2.5pt]
;
\filldraw[black]
(0,3.3) circle [radius=1pt]
(0,3) circle [radius=1pt]
(0,2.7) circle [radius=1pt]
(6.7,3) circle [radius=1pt]
(7,3) circle [radius=1pt]
(7.3,3) circle [radius=1pt]
;

\node(0) at (-.5,3) {\Huge \color{cyan} $\Biggl\{$};
\node(1) at (-1.3,3) {\Large \color{cyan} $\ell$};
\node(2) at (6,3.6) {\Huge \color{cardinal} $\overbrace{\;\;\;\;\;\;\;\;\;\;\;\;\;\;\;\;\;\;}$};
\node(3) at (6,4.5) {\Large \color{cardinal} $k$};

\node(4) at (3.1,2.6) { \small \color{black} $x$};

\node(5) at (2.2,3.3) { $v$};
\node(6) at (9,2.6) {};

\end{tikzpicture}

\caption{Illustration of the graph $G(\ell,k)$ and a minimum zero forcing set of size $\ell$ that consists of all the leaves adjacent to $v$ indicated by filled vertices. }
\label{exa:star-path}
\end{figure}

\begin{observation} \label{obs:g-lk} \rm For $\ell,k\geq 2$, we have $\Z(G(\ell,k))=\ell$. This is because any zero forcing set must contain at least $\ell-1$ degree-1 neighbors of $v$, but a set of this type is not a zero forcing set. However, the set of all degree-1 neighbors of $v$ is a minimum zero forcing set. 
\end{observation}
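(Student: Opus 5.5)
We need to prove Observation \ref{obs:g-lk}: for $\ell,k\geq 2$, $\Z(G(\ell,k))=\ell$. The plan is to establish the upper bound by exhibiting a zero forcing set of size $\ell$, and the lower bound by showing every set of size at most $\ell-1$ fails. Label the leaves adjacent to $v$ as $u_1,\dots,u_\ell$ and the path as $v=p_0,p_1=x,p_2,\dots,p_k$.

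\emph{Upper bound.} Take $B=\{u_1,\dots,u_\ell\}$. Each $u_i$ has the single unfilled neighbor $v$, so $v$ is filled at time 1. Then $v$ has only $x$ unfilled and forces it. Each $p_i$ in turn forces $p_{i+1}$ along the path, so $B$ is a zero forcing set and $\Z(G(\ell,k))\le \ell$.

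\emph{Lower bound.} Suppose $B$ is a zero forcing set.

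First, $B$ must contain at least $\ell-1$ of the leaves $u_i$. Any unfilled $u_i$ can only be forced by $v$, its unique neighbor. Since $v$ forces at most once and only when all but one of its neighbors are filled, at most one $u_i$ can lie outside $B$. This is the key counting step: it uses only that a vertex forces at most once and that $v$ must have a single unfilled neighbor when it forces.

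Second, if $|B|=\ell-1$, then $B$ consists exactly of $\ell-1$ leaves, say all but $u_1$, and nothing else. Each $u_i\in B$ forces $v$ at the first step. Now $v$ has two unfilled neighbors, $u_1$ and $x$, since $\ell\ge 2$ guarantees the star has a leaf outside $B$ and $k\ge 2$ guarantees $x$ exists and is not a leaf. The filled leaves have no unfilled neighbors, so no further force is possible, and $B$ is not a zero forcing set. Hence $|B|\ge \ell$.

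The only subtle point is the case analysis when $|B|\le \ell-1$. If $B$ contains fewer than $\ell-1$ leaves, the first step already excludes it. If it contains exactly $\ell-1$ leaves, the size bound forces $B$ to have no other vertices, which reduces to the stalled configuration above. Since zero forcing closure is monotone, it suffices to check $B$ itself.
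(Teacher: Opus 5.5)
Your proof is correct and follows the same approach as the paper. The upper bound comes from the set of all $\ell$ leaves at $v$. The lower bound combines two facts: any zero forcing set must contain at least $\ell-1$ of those leaves, and exactly $\ell-1$ of them stall once $v$ has the two unfilled neighbors $u_1$ and $x$.

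One small point: the unfilled leaf $u_1$ exists by construction, so the real role of $\ell\ge 2$ is to make $B$ nonempty. The stalling argument goes through either way, so this does not affect the proof.
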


\begin{definition} \label{defn:star-path2} \rm Let $G^s(\ell,k)$ be $G(\ell,k)$ plus the vertices $u_1, u_2, \ldots, u_s$ each of whose only adjacency is to the neighbor of $v$ on the path, called $x$. We call the neighbor of $x$ on the long path $y$. The graph $G^3(\ell,k)$ is illustrated in Figure~$\ref{exa:star-path2}$.
\end{definition}

\begin{figure}[ht]
\centering
\begin{tikzpicture}[scale=.7]
\draw[thick] (0,1.75)--(2,3)--(6.3,3);
\draw[thick] (0,4.25)--(2,3);
\draw[thick] (0,1)--(2,3)--(0,5);
\draw[thick] (7.7,3)--(9,3);
\draw[thick] (2.3,1.5)--(3,3)--(3,1.5);
\draw[thick] (3,3)--(3.7,1.5);

\filldraw[black]
(0,1.75) circle [radius=3.5pt]
(0,1) circle [radius=3.5pt]
(0,5) circle [radius=3.5pt]
(0,4.25) circle [radius=3.5pt]
;
\filldraw[white]
(0,1.75) circle [radius=2.5pt]
(0,1) circle [radius=2.5pt]
(0,5) circle [radius=2.5pt]
(0,4.25) circle [radius=2.5pt]
;

\filldraw[black]
(2,3) circle [radius=3.5pt]
(3,3) circle [radius=3.5pt]
(4,3) circle [radius=3.5pt]
(5,3) circle [radius=3.5pt]
(6,3) circle [radius=3.5pt]

(8,3) circle [radius=3.5pt]
(9,3) circle [radius=3.5pt]
;
\filldraw[white]
(2,3) circle [radius=2.5pt]
(3,3) circle [radius=2.5pt]
(4,3) circle [radius=2.5pt]
(5,3) circle [radius=2.5pt]
(6,3) circle [radius=2.5pt]

(8,3) circle [radius=2.5pt]
(9,3) circle [radius=2.5pt]
;

\filldraw[black]
(0,3.3) circle [radius=1pt]
(0,3) circle [radius=1pt]
(0,2.7) circle [radius=1pt]
(6.7,3) circle [radius=1pt]
(7,3) circle [radius=1pt]
(7.3,3) circle [radius=1pt]
;

\node(0) at (-.5,3) {\Huge \color{cyan} $\Biggl\{$};
\node(1) at (-1.3,3) {\Large \color{cyan} $\ell$};
\node(2) at (6,3.9) {\Huge \color{cardinal} $\overbrace{\;\;\;\;\;\;\;\;\;\;\;\;\;\;\;\;\;}$};
\node(3) at (6,4.8) {\Large \color{cardinal} $k$};

\node(5) at (2.2,3.3) { $v$};
\node(6) at (3.2,3.3) {$x$};
\node(7) at (4.2,3.3) {$y$};

\filldraw[black]
(2.3,1.5) circle [radius=3.5pt]
(3,1.5) circle [radius=3.5pt]
(3.7,1.5) circle [radius=3.5pt]
;

\filldraw[white]
(2.3,1.5) circle [radius=2.5pt]
(3,1.5) circle [radius=2.5pt]
(3.7,1.5) circle [radius=2.5pt]
;

\node(9) at (2.3,1) {$u_1$};
\node(9) at (3,1) {$u_2$};
\node(9) at (3.7,1) {$u_3$};

\end{tikzpicture}

\caption{The graph $G^3(\ell,k)$.}
\label{exa:star-path2}
\end{figure}

\begin{observation} \label{obs:g-lk2} \rm For $\ell,k\geq 2$, we have $\Z(G^s(\ell,k))=\ell+s-1$. This follows because at least $\ell-1$ degree-1 neighbors of $v$ and at least $s-1$  degree-1 neighbors of $x$ have to be included in any zero forcing set, but by themselves, they do not constitute a zero forcing set. However, a set consisting of these vertices  with the addition of the last leaf adjacent to  $x$ is a zero forcing set. \end{observation}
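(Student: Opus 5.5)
The plan is to prove the two inequalities $\Z(G^s(\ell,k))\geq \ell+s-1$ and $\Z(G^s(\ell,k))\leq \ell+s-1$ separately. The lower bound rests on a standard leaf-counting argument followed by a direct simulation of the one remaining candidate set; the upper bound is an explicit forcing sequence. Throughout, write $\ell_1,\dots,\ell_\ell$ for the leaves adjacent to $v$, $u_1,\dots,u_s$ for the leaves adjacent to $x$, and $y=p_1,p_2,\dots$ for the vertices of the long path beyond $x$, ending at $p_{k-1}$.

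\textbf{Required leaves.} Suppose two leaves $\ell_a,\ell_b$ adjacent to $v$ are both outside a zero forcing set $B$. A leaf can only be forced by its unique neighbor, so each of them could only be forced by $v$. But $v$ has at least two unfilled neighbors as long as both of them are unfilled, so $v$ can never force, and neither leaf is ever filled. Hence $B$ contains at least $\ell-1$ of the leaves at $v$. The same argument at $x$ shows that $B$ contains at least $s-1$ of the $u_i$ (vacuous when $s=1$). So $|B|\geq \ell+s-2$.

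\textbf{Excluding $\ell+s-2$.} If $|B|=\ell+s-2$, then $B$ consists of exactly these leaves: say $\ell_\ell$ and $u_s$ are the only leaves missing, and $v$, $x$ and the whole long path are unfilled. I will run the color change rule directly.
\begin{itemize}
\item The filled leaves at $v$ force $v$, since $\ell\geq 2$ means at least one such leaf exists.
\item If $s\geq 2$, the filled $u_i$ force $x$.
\item Once filled, $v$ forces $\ell_\ell$, but only after $x$ is filled. If $s=1$, then $x$ is never filled, because its only possible forcers are $v$ (which still has two unfilled neighbors, $\ell_\ell$ and $x$) and $y$ (which is unfilled).
\item When $s\geq 2$, the filled vertex $x$ keeps the two unfilled neighbors $y$ and $u_s$ forever. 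Nothing on the path is filled, so the only possible forcer of $u_s$ is $x$, and the only possible forcer of $y$ is $x$ or $p_2$, with $p_2$ also unfilled.
\end{itemize}
So the process stalls with the long path (nonempty since $k\geq 2$) unfilled, and $B$ is not a zero forcing set.

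\textbf{Upper bound.} Take $B$ to be the $\ell-1$ leaves at $v$ together with all $s$ leaves at $x$. At time $1$, $v$ and $x$ are both forced. At time $2$, $v$ forces its remaining leaf and $x$ forces $y$, since $y$ is now the only unfilled neighbor of $x$. The path is then forced vertex by vertex. This gives $|B|=\ell+s-1$.

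The only delicate step is the stall analysis in the second paragraph. It must check every possible forcer of $y$ and of $u_s$, and treat the case $s=1$ separately (there $x$ is never filled, rather than filled with two unfilled neighbors); the hypothesis $k\geq 2$ is exactly what guarantees $y$ exists.
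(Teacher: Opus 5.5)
Your proof is correct and follows the same route as the paper: a leaf-counting lower bound of $\ell+s-2$, a check that this set of leaves alone stalls, and an explicit zero forcing set obtained by also including the last leaf $u_s$ at $x$. Your stall simulation, including the separate $s=1$ case where $x$ is never filled, spells out what the paper only asserts; the one nit is that for $k=2$ the vertex $p_2$ does not exist, so $x$ is then the only possible forcer of $y$, which does not affect the argument.
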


\subsection{Vertex Deletion}

Our next results show that the difference between the transmission zero forcing number of a graph $G$ and an induced subgraph on $|V(G)|-1$ vertices is unbounded from above and below. 

\begin{theorem}\label{thm:vert-del-1} 
For any $0<\alpha,\beta<1$ and positive integer $p$, there are values of $k,\ell$ such that  $\Zab(G(\ell,k)-v)-\Zab(G(\ell,k))\geq p$.
\end{theorem}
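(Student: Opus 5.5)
The plan is to compute (or bound) both sides explicitly: an upper bound of $\ell$ for $\Zab(G(\ell,k))$, and an exact lower bound for $\Zab(G(\ell,k)-v)$ via \Cref{basic} and \Cref{thm:paths}. The number $k$ will be chosen large in terms of $p$, and then $\ell$ large in terms of $k$.

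First, I would describe $G(\ell,k)-v$. Deleting $v$ leaves $\ell$ isolated vertices (the leaves of the star) together with the path $P_k$ starting at $x$. An isolated vertex can never receive weight, so it must be in every transmission forcing set; that is, $\Zab(K_1)=1$. By \Cref{basic}(\ref{basic4}) we get
\[
\Zab(G(\ell,k)-v)=\ell+\Zab(P_k).
\]
Since $0<\alpha,\beta<1$, there is a $j\ge 1$ with $\alpha^{j}<\beta\le\alpha^{j-1}$. By \Cref{thm:paths}, in either case
\[
\Zab(P_k)\ \ge\ \left\lceil \tfrac{2k}{2j+1}\right\rceil\ \ge\ \tfrac{k}{j+1}.
\]
So it suffices to take $k=p(j+1)$ (also ensuring $k\ge 2$), which gives $\Zab(P_k)\ge p$.

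Next, I would bound $\Zab(G(\ell,k))$ by exhibiting the set $B$ of all $\ell$ leaves adjacent to $v$, which is a zero forcing set by \Cref{obs:g-lk}. At time $1$, each leaf has $v$ as its unique unfilled neighbor, so $w_1(v)=\ell\alpha$. Once $v$ is filled it forces $x$, and the forcing then proceeds down the path. Each path vertex has a single filled in-neighbor, so the vertex at distance $i$ from $v$ receives weight $\ell\alpha^{i+1}$. The smallest such weight is $\ell\alpha^{k+1}$, at the far endpoint.

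The one point needing care is that each step must actually fire, i.e., every vertex along the way must reach threshold before it can force the next one. Choosing $\ell\ge \max\{2,\beta/\alpha^{k+1}\}$ guarantees $\ell\alpha^{i+1}\ge\beta$ for all $0\le i\le k$. Hence all vertices are filled, $B$ is a transmission forcing set, and $\Zab(G(\ell,k))\le \ell$.

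Combining the two bounds gives
\[
\Zab(G(\ell,k)-v)-\Zab(G(\ell,k))\ \ge\ \ell+p-\ell\ =\ p.
\]
There is no real obstacle beyond the order of choices: $k$ must be fixed first (depending on $p$ and $j$), and then $\ell$ (depending on $k$, $\alpha$, $\beta$), so that the path lower bound is unaffected by $\ell$.
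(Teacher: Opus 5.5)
Your proposal is correct and is essentially the paper's proof: the $\ell$ leaves at $v$ form a transmission forcing set of $G(\ell,k)$ once $\ell\alpha^{k+1}\ge\beta$, while $G(\ell,k)-v$ needs all $\ell$ isolated vertices plus at least $\lceil \frac{2k}{2j+1}\rceil$ vertices for $P_k$ by \Cref{thm:paths}. The only differences are minor: you use just the upper bound $\Zab(G(\ell,k))\le\ell$ (the paper also proves equality, which is not needed), you fix $k=p(j+1)$ explicitly, and you check slightly more carefully that every intermediate path vertex reaches the threshold.
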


\begin{proof} Let $j$ be the smallest positive integer such that $\alpha^j<\beta$.  
Choose $k\geq 2$ large enough so that $\lceil \frac{2k}{2j+1}\rceil\geq p$. Choose $\ell\geq 2$ large enough so that $\ell \alpha^{k+1}\geq \beta$. 

First, we will show that $\Zab(G(\ell,k))= \ell$. By Observations~\ref{basic} (2) and \ref{obs:g-lk}, we have $\Zab(G(\ell,k))\geq \ell$. Let $B$ be the set of all degree-1 neighbors of $v$. Then at time step 1, $w(v)=\ell\alpha$. Zero forcing proceeds along the path, and the last vertex on the path receives a weight of $\ell\alpha^{k+1}$, which is greater than or equal to $\beta$. Hence $B$ is an $(\alpha, \beta)$-transmission forcing set, and $\Zab(G(\ell,k))= \ell$.

Next, we will show that $\Zab(G(\ell,k)-v)\geq \ell+p$. In $G(\ell,k)-v$, all of the degree-1 neighbors of $v$ become isolated vertices, and each must be included in any $(\alpha, \beta)$-transmission forcing set. The remaining vertices induce a path with $k$ vertices. By Theorem~\ref{thm:paths}, we have $\Zab(P_k) \geq \min\left\{\lceil \frac{k}{j}\rceil, \lceil \frac{2k}{2j+1}\rceil\right\} = \lceil \frac{2k}{2j+1}\rceil$. By hypothesis, $\lceil \frac{2k}{2j+1} \rceil\geq p$, so we have $\Zab(G(\ell,k)-v)\geq \ell+p$, as desired. 
\end{proof}

\begin{theorem} \label{thm:vert-del-2} 
For any $0<\alpha,\beta<1$ and positive integer $t$, there are values of $k,\ell,s$ such that  $\Zab(G^s(\ell,k))-\Zab(G^s(\ell,k)-u_s)\geq t$.
\end{theorem}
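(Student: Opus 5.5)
The plan is to take $s=1$ and reuse the parameters from the proof of \Cref{thm:vert-del-1}. Since $G^1(\ell,k)-u_1=G(\ell,k)$, the statement reduces to showing that attaching a single pendant leaf $u_1$ at $x$ increases the transmission forcing number by at least $t$. (If $s=1$ is felt to be degenerate, the same argument should go through for general $s$ by putting $u_1,\dots,u_{s-1}$ into every set considered.)

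\textbf{Choice of parameters and upper bound for $G(\ell,k)$.} Let $j$ be the smallest positive integer with $\alpha^j<\beta$. Choose $k$ so large that $\lceil \frac{2(k-2)}{2j+1}\rceil \geq t+2$. Then choose $\ell\geq 2$ with $\ell\alpha^{k+1}\geq\beta$. Exactly as in the proof of \Cref{thm:vert-del-1}, the set of the $\ell$ leaves at $v$ transmission-forces $G(\ell,k)$. Hence $\Zab(G(\ell,k))=\ell$.

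\textbf{Lower bound for $G^1(\ell,k)$.} Let $B$ be any transmission forcing set of $G^1(\ell,k)$. Since $v$ has $\ell\geq 2$ leaf neighbours, at least $\ell-1$ of them lie in $B$. The key point is that $x$ now has three neighbours $v,u_1,y$, and in $\ora{FF}(G^1(\ell,k),B)$ each vertex has out-degree at most one. I will split into cases according to what $x$ does.
\begin{enumerate}
\item If $x$ performs no force, or $x$ forces $u_1$ or $v$, then no weight ever passes from $x$ to $y$. The path $Q$ induced by $y$ and the $k-2$ further path vertices is then filled only by forces inside $Q$. Forces from $x$ are excluded, and every other vertex of $Q$ has all its neighbours in $Q$. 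So $B\cap V(Q)$ is an $(\alpha,\beta)$-transmission forcing set of $Q\cong P_{k-1}$. By \Cref{thm:paths}, this gives $|B\cap V(Q)|\geq \lceil\frac{2(k-1)}{2j+1}\rceil$.
\item If $x\to y$, then at the time of that force $u_1$ and $v$ are already filled. The leaf $u_1$ can only receive weight from $x$, so $u_1\in B$. Likewise $v$ must already be filled without help from $x$, which requires all $\ell$ leaves of $v$ or $v$ itself in $B$. This gives $|B|\geq \ell+1$.
\end{enumerate}
Case 2 only yields $\ell+1$, so I must show it cannot actually produce a cheap forcing set. The natural attempt is to check whether the weight reaching the end of the path from $x$ is large. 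Here $x$ receives $\ell\alpha^2$ from $v$ (plus possibly $\alpha$ from $u_1$), and the subsequent weights are $\geq \ell\alpha^{k+1}\geq\beta$. So with $\ell$ leaves plus $u_1$ the whole graph \emph{is} forced. That would give $\Zab(G^1)=\ell+1$ only, which is not enough.

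Hence the choice above fails, and the main obstacle is selecting parameters that genuinely block Case 2. The fix I would try first is to make $\ell$ small and $s$ large, so that $x$'s big weight in $G^{s-1}$ comes from the $u_i$'s:
\begin{itemize}
\item In $G^{s-1}(\ell,k)$, take $B$ to be all $\ell$ leaves of $v$ together with $u_1,\dots,u_{s-1}$. If $\ell\alpha\geq\beta$, then $v$ is filled at step 1 and $x$ gets $(s-1)\alpha$ plus possibly $\ell\alpha^2$. The chain along the path then delivers at least $(s-1)\alpha^{k}$ to every path vertex, which is $\geq\beta$ once $s$ is large. Thus $\Zab(G^{s-1})\leq \ell+s-1$.
\item In $G^s(\ell,k)$, the same analysis applies but now with $u_s$ unfilled. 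Either $u_s$ is put in $B$, so that the $u$'s contribute through $x\to y$, or $x$ does not force $y$, in which case the path must fill itself and costs $\geq\lceil\frac{2(k-1)}{2j+1}\rceil$.
\end{itemize}
The delicate part is the following. Including $u_s$ costs only one extra vertex, so the gap again collapses to about $1$. To get an unbounded gap I would instead need $v$ to be blocked: for instance, take $\ell\alpha<\beta$, so that $v$ can only be filled by $x$. Then in $G^s$ vertex $x$ must choose between serving $v$ and serving $y$, while in $G^{s-1}$ it can fill $v$ via the all-leaves-plus-$x$-neighbours configuration. Making this dichotomy rigorous, through a careful case analysis of the out-edge of $x$ in the forcing forest combined with \Cref{thm:paths} applied to the residual path, is the step I expect to be hardest.
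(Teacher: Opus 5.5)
Your proposal never reaches a working construction, so there is a genuine gap. You correctly see that your first choice with $s=1$ fails when $\alpha<\beta$: all $\ell$ leaves at $v$ plus $u_1$ force $G^1(\ell,k)$, giving a gap of only $1$. You also see that taking $s$ large with $(s-1)\alpha^k\ge\beta$ gives a gap of only about $1$. But your final suggestion, $\ell\alpha<\beta$, moves in the wrong direction. Then $v$ can be filled only by lying in $B$ or by being forced by $x$. So $v$ can hand $x$ at most $\alpha$, and only at step $1$. The extra leaf $u_s$ then has no large weight to block, and the ``dichotomy'' you describe does not separate $G^s(\ell,k)$ from $G^{s-1}(\ell,k)$.

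The missing idea is a timing obstruction that needs $\ell$ \emph{large} and $s$ at a precise threshold. Take $s$ least with $s\alpha\ge\beta$, so that $(s-1)\alpha<\beta\le s\alpha$. Take $\ell$ with $\ell\alpha\ge\beta$ and $\ell\alpha^{k+1}+(s-1)\alpha^k\ge\beta$.

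In $G^{s-1}(\ell,k)$, start from all leaves at $v$ and at $x$. At step $1$, $v$ is filled with weight $\ell\alpha$, but $x$ is not, since it has only $(s-1)\alpha$. So $v\to x$ at step $2$, and the weight $(s-1)\alpha+\ell\alpha^2$ carries to the end of the path. This gives $\Zab(G^{s-1}(\ell,k))\le \ell+s-1$.

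In $G^s(\ell,k)$, suppose $x\to y$. Then $u_1,\dots,u_s\in B$. If also $x\notin B$, these leaves fill $x$ already at step $1$. Hence $v$ can contribute to $x$ only at step $1$, which requires $v\in B$, and so $w(x)\le(s+1)\alpha$. Let $i$ be least with $(s+1)\alpha^i<\beta$. The chain from $x$ dies within about $i$ vertices, and Theorem~\ref{thm:paths} on the rest of the path (with $k$ chosen so that $\lceil 2(k-i)/(2j+1)\rceil\ge t+1$) costs at least $t+1$ further vertices. The remaining cases, $x\in B$ and ``$x$ does not force $y$'', follow from Theorem~\ref{thm:paths} applied to $P-v$ and $P-\{v,x\}$.

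Your assertion that $s=1$ fails is also wrong when $\alpha\ge\beta$. Your computation that $x$ receives $\ell\alpha^2$ from $v$ assumes $x$ is still unfilled after step $1$. When $\alpha\ge\beta$, $u_1$ alone fills $x$ at step $1$, and $v$ never forces $x$. That is exactly this blocking effect, with $s=1$ as the threshold value.
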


\begin{proof} Choose $s$ as the least positive integer so that $s\alpha\geq \beta$, then choose $i$ to be the least positive integer such that $(s+1)\alpha^i<\beta$.   
Let $j$ be the smallest positive integer such that $\alpha^j<\beta$.
Let $k$ be large enough so that 
$\lceil \frac{2(k-i)}{2j+1}\rceil\geq t+1$. Let $\ell$ be large enough so that $\ell\alpha \geq \beta$ and $\ell\alpha^{k+1}+(s-1)\alpha^k\geq \beta$. 

We observe that $G^s(\ell,k)-u_s \cong G^{s-1}(\ell,k)$. First, we will show that 
$\Zab(G^{s-1}(\ell,k))\leq \ell+s-1$.  
Let $B$ be the set of the degree-1 neighbors of $v$ and the degree-1 neighbors of $x$. Then $|B|=\ell+s-1$, and $w(v)=\ell\alpha$, $w(x)=(s-1)\alpha+\ell\alpha^2$, and the last vertex on the path has weight $(s-1)\alpha^{k} +\ell\alpha^{k+1}$, which is at least $\beta$ by our choice of $\ell$. Thus, $\Zab(G^{s-1}(\ell,k))\leq\ell+s-1$. 

Next, we will show that $\Zab(G^{s}(\ell,k))\geq\ell+s-1+t$. Let $B$ be an $(\alpha, \beta)$-transmission forcing set. Then $B$ contains at least $\ell-1$ degree-1 neighbors of $v$ and at least $s-1$ degree-1 neighbors of $x$. Let $P$ denote the (unique) induced path of $G^s(\ell,k)$, isomorphic to $P_{k+1}$, that contains $v,x$, and $y$ but no other neighbors of $v$ or $x$.

Suppose $x$ does not force $y$.  Then $w(y)\leq 1$ and the zero forcing process on $P-\{v,x\} \cong P_{k-1}$ proceeds as in Theorem~\ref{thm:paths}, which implies that at least $\lceil \frac{2(k-1)}{2j+1}\rceil\geq \lceil \frac{2(k-i)}{2j+1}\rceil\geq t+1$ more initially filled vertices are needed.  So $|B| \geq \ell-1+s-1+t+1=\ell+s-1+t$. 

Now assume $x$ forces $y$.  Then $\{u_1, \ldots, u_s\} \subseteq B$ and $y\notin B$.  We must consider two cases:  when $x\in B$ and when $x\notin B$.  Suppose $x\in B$. Then $w(x)=1$, and $P-v\cong P_{k}$. By Theorem~\ref{thm:paths}, we need at least $\lceil \frac{2k}{2j+1}\rceil\geq \lceil \frac{2(k-i)}{2j+1}\rceil\geq t+1$ more initially filled vertices, including $x$. 
It follows that $|B| \geq \ell-1 + s + t+1 = \ell + s + t$.  
Finally, suppose $x\notin B$.  Then $x$ receives a weight of $s\alpha\geq \beta$ from its degree-1 neighbors at time step 1 and becomes filled. If $x$ receives any weight from $v$ at time step 1, then $v\in B$; so $w(x) = s\alpha$ or $w(x) = (s+1)\alpha$. Then $x$ sends at most $(s+1)\alpha^2$ to $y$, and the transmission forcing process continues until the vertex on the path that receives weight at most $(s+1)\alpha^{i-1}$ and becomes filled. The remaining vertices induce a path with $k-i+1$ unfilled vertices.  
By Theorem~\ref{thm:paths}, the remaining $k-i$ vertices require at least $\lceil \frac{2(k-i)}{2j+1} \rceil \geq t+1$ additional initial vertices in $B$.  So $|B| \geq \ell-1+s+t+1=\ell+s+t$.  Note that 
the propagation from $x$ could be interrupted by a vertex in $B$; however, this would have the effect of replacing the path of unfilled vertices with an even longer path.  
Hence $\Zab(G^s(\ell,k))-\Zab(G^s(\ell,k)-u_s)\geq t$, as desired. 
\end{proof}

\subsection{Edge Deletion}
Next we show that deleting an edge can cause the transmission zero forcing number to go up or down arbitrarily. 

\begin{theorem} \label{thm:edge-del-a}
For any $0<\alpha,\beta<1$ and positive integer $p$, there are values of $k,\ell$ such that  $\Zab(G(\ell,k)-vx)-\Zab(G(\ell,k))\geq p$.
\end{theorem}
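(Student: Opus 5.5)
The plan mirrors the proof of \Cref{thm:vert-del-1}. Let $j$ be the smallest positive integer with $\alpha^j<\beta$. First choose $k\geq 2$ large enough that $\lceil \frac{2k}{2j+1}\rceil \geq p+1$. Then choose $\ell\geq 2$ large enough that $\ell\alpha^{k+1}\geq\beta$.

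\textbf{Upper bound for $G(\ell,k)$.} Exactly as in \Cref{thm:vert-del-1}, take $B$ to be the $\ell$ leaves at $v$. Then $w(v)=\ell\alpha$, and the forces propagate along the path, so the last vertex receives weight $\ell\alpha^{k+1}\geq\beta$. Hence $\Zab(G(\ell,k))=\ell$, the lower bound $\Zab(G(\ell,k))\geq\ell$ coming from \Cref{basic} and \Cref{obs:g-lk}.

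\textbf{Lower bound after deleting $vx$.} The graph $G(\ell,k)-vx$ is the disjoint union of the star $K_{1,\ell}$ (centered at $v$) and a path on the $k$ vertices $x,\dots$. By \Cref{basic}(\ref{basic4}), the transmission forcing number is additive over components. For the star, \Cref{basic} gives $\Zab(K_{1,\ell})\geq \Z(K_{1,\ell})=\ell-1$; for $\ell\geq 2$ we can also cite \Cref{prop:star}. For the path, \Cref{thm:paths} gives
\[\Zab(P_k)\geq \min\left\{\left\lceil \tfrac{k}{j}\right\rceil, \left\lceil \tfrac{2k}{2j+1}\right\rceil\right\}=\left\lceil \tfrac{2k}{2j+1}\right\rceil\geq p+1.\]
Therefore $\Zab(G(\ell,k)-vx)\geq \ell-1+p+1=\ell+p$, and the difference is at least $p$.

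The only delicate point is that the star now contributes possibly one less than $\ell$ (when $\ell=1$ the star is $K_2$ and the bound is $\Z(K_2)=1$). That slack is why $k$ is chosen to give $p+1$ rather than $p$ from the path. The remaining steps are routine bookkeeping. We also need $j$ to exist, which holds because $\alpha<1$ and $\beta>0$.
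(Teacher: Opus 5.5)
Your proof is correct and follows the paper's argument essentially verbatim. It uses the same choice of $j$ and $k$, the same upper bound $\Zab(G(\ell,k))=\ell$ from the $\ell$ leaves at $v$, and the same split of $G(\ell,k)-vx$ into $K_{1,\ell}\sqcup P_k$, with \Cref{thm:paths} supplying $p+1$ on the path. The one difference is that the paper also imposes $(\ell-1)\alpha^2\geq\beta$ so that \Cref{prop:star} gives $\Zab(K_{1,\ell})=\ell-1$ exactly, whereas you correctly note that $\Zab(K_{1,\ell})\geq \Z(K_{1,\ell})=\ell-1$ suffices, so that condition is unnecessary. (Your parenthetical about $\ell=1$ is off, since the star then contributes $1=\ell$, but it is moot because you take $\ell\geq 2$.)
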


\begin{proof}
Let $j$ be the smallest exponent of $\alpha$ so that $\alpha^j<\beta$.  
Choose $k\geq 2$ large enough so that $\lceil \frac{2k}{2j+1}\rceil\geq p+1$. Choose $\ell\geq 2$ large enough so that $(\ell-1)\alpha^2\geq \beta$ and $\ell \alpha^{k+1}\geq \beta$. As in the proof of Theorem~\ref{thm:vert-del-1}, $\Zab(G(\ell,k))=\ell$. We will show that $\Z_{\alpha,\beta}(G(\ell,k)-vx)\geq \ell+p$. 
Now $G(\ell,k)-vx$ has two components, a star $K_{1,\ell}$ and a path $P_k$. Proposition~\ref{prop:star} and the choice of $\ell$ imply $\Zab(K_{1,\ell})=\ell-1$. Since $\Zab(P_k) \geq \lceil \frac{2k}{2j+1}\rceil$ by Theorem~\ref{thm:paths} and by hypothesis $\lceil \frac{2k}{2j+1}\rceil\geq p+1$, we have $\Zab(G(\ell,k)-vx)\geq (\ell-1)+p+1=\ell+p$, as desired. 
\end{proof}

\begin{theorem}
For any $0<\alpha,\beta<1$ and positive integer $t$, there are values of $k,\ell,s$ such that  $\Zab(G^s(\ell,k))-\Zab(G^s(\ell,k)-xu_s)\geq t$.
\end{theorem}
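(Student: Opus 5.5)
The plan is to reuse the construction from \Cref{thm:vert-del-2}. Deleting the edge $xu_s$ from $G^s(\ell,k)$ gives $G^{s-1}(\ell,k)$ together with the isolated vertex $u_s$. By \Cref{basic}(\ref{basic4}) we then have
\[
\Zab(G^s(\ell,k)-xu_s)=\Zab(G^{s-1}(\ell,k))+1 .
\]
So it suffices to produce $k,\ell,s$ with $\Zab(G^s(\ell,k))-\Zab(G^{s-1}(\ell,k))\geq t+1$. The parameters will be chosen exactly as in the proof of \Cref{thm:vert-del-2}, except that $t$ is replaced by $t+1$:

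\begin{enumerate}
\item $s$ is the least positive integer with $s\alpha\geq\beta$.
\item $i$ is the least positive integer with $(s+1)\alpha^i<\beta$.
\item $j$ is the least positive integer with $\alpha^j<\beta$.
\item $k$ is large enough that $\lceil \frac{2(k-i)}{2j+1}\rceil\geq t+2$.
\item $\ell$ is large enough that $\ell\alpha\geq\beta$ and $\ell\alpha^{k+1}+(s-1)\alpha^k\geq\beta$.
\end{enumerate}

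\textbf{Upper bound.} The first half of the proof of \Cref{thm:vert-del-2} applies verbatim. Take $B$ to be all leaves at $v$ together with all $s-1$ leaves at $x$. Then $v$ gets weight $\ell\alpha$, $x$ gets weight $(s-1)\alpha+\ell\alpha^2$, and the end of the path gets weight at least $\ell\alpha^{k+1}+(s-1)\alpha^k\geq\beta$. This gives $\Zab(G^{s-1}(\ell,k))\leq \ell+s-1$.

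\textbf{Lower bound.} Run the second half of the proof of \Cref{thm:vert-del-2} with $t+1$ in place of $t$, giving $\Zab(G^{s}(\ell,k))\geq \ell+s+t$. Any transmission forcing set contains at least $\ell-1$ leaves at $v$ and $s-1$ leaves at $x$. There are three cases.

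\begin{enumerate}
\item \emph{$x$ does not force $y$.} The path beyond $x$, which has $k-1$ vertices, is essentially on its own. \Cref{thm:paths} then forces at least $\lceil\frac{2(k-1)}{2j+1}\rceil\geq t+2$ further initial vertices.
\item \emph{$x$ forces $y$ and $x\in B$.} All $s$ leaves at $x$ are in $B$, and $P-v\cong P_k$ needs at least $t+2$ initial vertices, one of which is $x$.
\item \emph{$x$ forces $y$ and $x\notin B$.} The weight of $x$ is at most $(s+1)\alpha$. By the choice of $i$, propagation from $x$ dies out within $i$ steps along the path. At least $k-i$ path vertices then remain, and they need at least $t+2$ initial vertices by \Cref{thm:paths}. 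An interruption of the propagation by a vertex of $B$ only lengthens the unfilled stretch, as noted in \Cref{thm:vert-del-2}.
\end{enumerate}

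In every case $|B|\geq \ell-1+s-1+t+2=\ell+s+t$. Combining the two bounds,
\[
\Zab(G^s(\ell,k))-\Zab(G^s(\ell,k)-xu_s)\geq (\ell+s+t)-(\ell+s-1)-1=t .
\]

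The main obstacle is the lower-bound case analysis. Applying \Cref{thm:paths} to a tail of the path really requires the tail to behave like a standalone path. A tail vertex adjacent to a filled predecessor has weight $\leq 1$ coming in, which is at most what an initially filled endpoint contributes, so the tail is no easier to fill than a standalone path. I would verify this monotonicity carefully; it is the same point the proof of \Cref{thm:vert-del-2} glosses over. The remaining steps are routine given that theorem.
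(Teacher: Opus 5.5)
Your proposal is correct and is essentially the paper's proof. The paper also uses $G^s(\ell,k)-xu_s\cong G^{s-1}(\ell,k)\sqcup K_1$ and Observation~\ref{basic}(4) to lose exactly one for the isolated vertex, then applies Theorem~\ref{thm:vert-del-2} with $t+1$ in place of $t$; it simply cites that theorem rather than re-running both halves of its proof with the shifted parameters as you do.
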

\begin{proof}
Let $t' = t+1$ and choose the corresponding values $k,\ell,s$ as in Theorem~\ref{thm:vert-del-2} so that $\Zab(G^s(\ell,k))-\Zab(G^s(\ell,k)-u_s) \geq t'$.  We observe that $G^s(\ell,k)-xu_s \cong G^{s-1}(\ell,k) \sqcup K_1 \cong G^s(\ell,k)-u_s \sqcup K_1$.  It follows by Observation~\ref{basic}(4) that $\Zab(G^s(\ell,k))-\Zab(G^s(\ell,k)-xu_s) \geq t'-1 = t$. 
\end{proof}

\subsection{Edge Contraction}

Similarly to vertex or edge deletion, edge contraction can cause the transmission zero forcing number to go up or down arbitrarily.  
\begin{corollary}
For any $0<\alpha,\beta<1$ and positive integer $t$, there are values of $k,\ell,s$ such that  $\Zab(G^s(\ell,k))-\Zab(G^s(\ell,k)/xu_s)\geq t$.   
\end{corollary}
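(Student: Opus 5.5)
The plan is to reduce the corollary to \Cref{thm:vert-del-2} by identifying the contracted graph. Contracting the edge $xu_s$ in $G^s(\ell,k)$ identifies the leaf $u_s$ with its unique neighbor $x$. Since $N(u_s)=\{x\}$, the identification creates no new adjacencies for $x$ and only suppresses the loop from the edge $xu_s$. The merged vertex is therefore adjacent to exactly $v$, $y$ and $u_1,\dots,u_{s-1}$. Hence $G^s(\ell,k)/xu_s \cong G^s(\ell,k)-u_s \cong G^{s-1}(\ell,k)$, and in particular
\[ \Zab(G^s(\ell,k)/xu_s) = \Zab(G^s(\ell,k)-u_s). \]

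With this isomorphism in hand, I will take $k,\ell,s$ to be exactly the values chosen in the proof of \Cref{thm:vert-del-2} for the given $t$. That theorem then gives $\Zab(G^s(\ell,k))-\Zab(G^s(\ell,k)-u_s)\geq t$. Substituting the equality above yields the claim. Unlike the edge-deletion result, this needs no shift $t'=t+1$, because contraction does not leave an isolated $K_1$ behind.

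The only step needing care is checking that the contraction genuinely produces $G^{s-1}(\ell,k)$ with the same roles for $v$, $x$ and $y$, and this is immediate since $u_s$ is a leaf. So I expect no real obstacle: the work is entirely contained in \Cref{thm:vert-del-2}.
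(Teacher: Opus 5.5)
Your proposal is correct and is the same argument as the paper's proof. The paper also observes that $G^s(\ell,k)/xu_s \cong G^{s-1}(\ell,k) \cong G^s(\ell,k)-u_s$ (contracting the pendant edge at the leaf $u_s$) and then applies Theorem~\ref{thm:vert-del-2} with the same $k,\ell,s$, with no shift in $t$ needed.
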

\begin{proof}
Since $G^s(\ell,k)/xu_s \cong G^{s-1}(\ell,k) \cong G^s(\ell,k)-u_s$, this result is a direct consequence of Theorem~\ref{thm:vert-del-2}.
\end{proof}

\begin{theorem} \label{thm:edge-contr-a}
    For any $0 < \alpha, \beta < 1$ and positive integer $p$, there exists a graph $H$ and an edge $e\in E(H)$ such that $\Zab(H/e) - \Zab(H) \geq p$.
\end{theorem}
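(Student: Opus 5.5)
We take two copies of the broom $G(\ell,k)$ from \Cref{defn:star-path}, with star centers $v_1,v_2$ and path neighbors $x_1,x_2$, and join them by the edge $e=v_1v_2$; call the result $H$. In $H/e$ the two centers merge into a single vertex $w$ with $2\ell$ pendant leaves and two pendant paths of $k$ vertices each. The idea is that in $H$ each star center feeds its own path with large weight. In $H/e$, however, $w$ can force only once, so one of the two paths has to be filled \emph{without} any help from $w$, and this costs roughly $\frac{2k}{2j+1}$ extra initial vertices by \Cref{thm:paths}.

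We fix the parameters as follows. Let $j$ be the smallest positive integer with $\alpha^j<\beta$. Choose $k\ge 2$ with $\lceil\frac{2k}{2j+1}\rceil\ge p+1$. Then choose $\ell\ge 2$ with $\ell\alpha\ge\beta$ and $\ell\alpha^{k+1}\ge\beta$.

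\textbf{Upper bound for $H$.} Let $B$ be the set of all $2\ell$ leaves. At time step 1, every leaf of $v_i$ has $v_i$ as its unique unfilled neighbor, so $w_1(v_1)=w_1(v_2)=\ell\alpha\ge\beta$ and both centers are filled. At time step 2, the only unfilled neighbor of $v_i$ is $x_i$, since $v_{3-i}$ is already filled. Forcing then proceeds down each path exactly as in the proof of \Cref{thm:vert-del-1}, and the last vertex of each path receives weight $\ell\alpha^{k+1}\ge\beta$. Hence $\Zab(H)\le 2\ell$.

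\textbf{Lower bound for $H/e$.} Let $B$ be any transmission forcing set of $H/e$. First, a leaf of $w$ can only be filled by $w$, and $w$ performs at most one force, so at most one leaf lies outside $B$; thus $B$ contains at least $2\ell-1$ leaves. Second, let $a_1,a_2$ be the path neighbors of $w$. For $w$ to force anything, it must have a unique unfilled neighbor. Hence at least one of $a_1,a_2$, say $a_2$ on path $Q_2$, is filled without $w$ forcing it, and afterwards $w$ never forces into $Q_2$. It follows that every force producing a vertex of $Q_2$ originates inside $Q_2$. So the restriction of $\ora{FF}(H/e,B)$ to $Q_2\cong P_k$ is a valid forcing process on $P_k$ from $B\cap V(Q_2)$, with the same weights. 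By \Cref{thm:paths}, this gives $|B\cap V(Q_2)|\ge\min\{\lceil\frac kj\rceil,\lceil\frac{2k}{2j+1}\rceil\}=\lceil\frac{2k}{2j+1}\rceil$. Combining the two observations, $\Zab(H/e)\ge 2\ell-1+\lceil\frac{2k}{2j+1}\rceil\ge 2\ell+p$, and therefore $\Zab(H/e)-\Zab(H)\ge p$.

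\textbf{Main obstacle.} The delicate step is justifying that $Q_2$ behaves like an isolated path. Two points need checking. First, forces from $Q_2$ onto $w$ do not feed weight back into $Q_2$, since forests are directed toward termini and a vertex is never forced after it is filled. Second, the weight vertices of $Q_2$ receive is exactly the weight they would get in $P_k$ under the same initial set. Once this is verified, the argument of \Cref{thm:paths}, which only uses the directed paths ending at each vertex and the fact that $|R_u|\le 2$, applies verbatim.
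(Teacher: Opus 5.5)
Your proof is correct, but it uses a different construction from the paper's.

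\textbf{How the constructions differ.} The paper uses a single broom $G(\ell,k)$ plus an apex $z$ adjacent to every vertex of the path $P$ (including $v$), and contracts $e=vz$. The leaves together with $z$ give $\Zab(H)\le \ell+1$. In $H/e$ the merged vertex is adjacent to all of $P-v$, so it can force into the path only after $k-1$ of its $k$ path neighbors are already filled. The paper therefore needs two cases: either no force enters $P-v$ from the merged vertex, or a single late force splits $P-v$ into $P_i\sqcup P_1\sqcup P_{k-i-1}$. This bookkeeping is why it requires $\lceil\frac{2k}{2j+1}\rceil\ge p+2$.

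Your two brooms glued along $v_1v_2$ contract to a spider. The only obstruction you need is that the center forces at most one vertex, so one leg of length $k$ gets no help from outside. This gives a single case, needs only $\lceil\frac{2k}{2j+1}\rceil\ge p+1$, and keeps $H$ and $H/e$ as trees. The cost is $\Zab(H)\le 2\ell$ rather than $\ell+1$, which does not matter for the difference. Both proofs rest on the same principle: a path that receives no forces from outside still needs $\lceil\frac{2k}{2j+1}\rceil$ initial vertices, by \Cref{thm:paths}.

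\textbf{One claim to fix.} You state that the restriction to $Q_2$ is the forcing process on an isolated $P_k$ from $B\cap V(Q_2)$ \emph{with the same weights}, and you list this as something to verify. It is false in general. In isolation $a_2$ is an endpoint and forces immediately. In $H/e$ it cannot force while $w$ is unfilled, and if its path neighbor gets filled from the other side first, it may force $w$ instead. So the set of forces and the weights on $Q_2$ can differ; for example, the neighbor of $a_2$ may receive $\alpha$ from $a_2$ in isolation but not in $H/e$.

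You do not need this claim. The lower-bound half of the proof of \Cref{thm:paths} uses only two facts:
\begin{enumerate}
\item every non-initial vertex has weight at least $\beta$, computed as in Observation~\ref{ob:u-weight};
\item the forcing forest on the path consists of subpaths oriented toward a unique sink, with vertices of $B$ as the only sources.
\end{enumerate}
Both facts hold for the restriction of $\ora{FF}(H/e,B)$ to $Q_2$, because no force enters $Q_2$ from $w$. Apply that counting directly to the restricted forest, as your final sentence already suggests. The paper invokes \Cref{thm:paths} in exactly this way, in a setting where every path vertex has an outside neighbor.
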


\begin{proof}
    Let $j$ be the smallest positive integer such that $\alpha^j < \beta$.  Choose $k\geq 2$ large enough so that $\lceil \frac{2k}{2j+1} \rceil \geq p+2$.  Choose $\ell \geq 2$ large enough so that $\ell\alpha^{k+1} \geq \beta$. Let $P$ denote the (unique) induced path on $k+1$ vertices of $G(\ell,k)$ that contains no neighbors of $v$ except $x$.  Let $H$ be the graph with vertex set $V(H) = V(G(\ell,k)) \cup \{z\}$ and edge set $E(H) = E(G(\ell,k)) \cup \{zw: w\in V(P)\}$; see Figure \ref{fig:edge-contr}.  Let $B$ be the set of all degree-1 neighbors of $v$ and the vertex $z$.  Then at time step 1, $w(v) = \ell\alpha$, and the transmission forcing process ends when the last vertex on the path receives a weight of $\ell\alpha^{k+1} \geq \beta$.  So $\Zab(H) \leq |B| = \ell+1$.

Let $e = vz$ and note that in the graph $H/e$, the identified vertex $v=z$ is adjacent to all other vertices; in particular, it is adjacent to every vertex in $P-v$.  An $(\alpha,\beta)$-transmission forcing set $B'$ must contain at least $\ell-1$ leaves adjacent to $v$. 
    
First, suppose $v$ never performs a force on a vertex in $P-v\cong P_k$. By Theorem~\ref{thm:paths}, this path requires at least $\lceil\frac{2k}{2j+1} \rceil \geq p+2$ initially filled vertices, so $|B'|\geq \ell-1+p+2=\ell+p+1$.
Now suppose $v$ performs a force to a vertex in $P-v$; note that $v$ can only do so after $k-1$ of its $k$ neighbors in $P-v$ have been filled. Furthermore, $v$ cannot force any of its degree-1 neighbors, hence all $\ell$ of them are in $B'$. Suppose that the vertex that $v$ forces is at distance $i+1$ from itself. Then $P-v \cong P_i \sqcup P_1 \sqcup P_{k-i-1}$.
By Theorem~\ref{thm:paths}, $B'$ must include at least $\lceil \frac{2i}{2j+1} \rceil + \lceil \frac{2(k-i-1)}{2j+1} \rceil \geq \lceil \frac{2(k-1)}{2j+1} \rceil \geq \lceil \frac{2k}{2j+1} \rceil - 1\geq p+1$ vertices of $P-v$.  Thus, we see that $|B'| \geq \ell + p+1$.  
We conclude that $\Zab(H/e) \geq p + \ell + 1$ and $\Zab(H/e) - \Zab(H) \geq p$.
\end{proof}

\begin{figure}[ht]
\centering
\begin{tikzpicture}[scale=.7]
\draw[thick] (0,1.75)--(2,3)--(6.3,3);
\draw[thick] (0,4.25)--(2,3);
\draw[thick] (0,1)--(2,3)--(0,5);
\draw[thick] (7.7,3)--(9,3);
\draw[thick] (5.2,1)--(2,3);
\draw[thick] (5.2,1)--(3,3);
\draw[thick] (5.2,1)--(4,3);
\draw[thick] (5.2,1)--(5,3);
\draw[thick] (5.2,1)--(6,3);
\draw[thick] (5.2,1)--(8,3);
\draw[thick] (5.2,1)--(9,3);

\filldraw[blue]

(0,1.75) circle [radius=3.5pt]
(0,1) circle [radius=3.5pt]
(0,5) circle [radius=3.5pt]
(0,4.25) circle [radius=3.5pt]
;

\filldraw[white]

(0,1.75) circle [radius=2.5pt]
(0,1) circle [radius=2.5pt]
(0,5) circle [radius=2.5pt]
(0,4.25) circle [radius=2.5pt]
;

\filldraw[black]
(2,3) circle [radius=3.5pt]
(3,3) circle [radius=3.5pt]
(4,3) circle [radius=3.5pt]
(5,3) circle [radius=3.5pt]
(6,3) circle [radius=3.5pt]

(8,3) circle [radius=3.5pt]
(9,3) circle [radius=3.5pt]
(5.2,1) circle [radius=3.5pt]

(0,3.3) circle [radius=.7pt]
(0,3) circle [radius=.7pt]
(0,2.7) circle [radius=.7pt]
(6.7,3) circle [radius=.7pt]
(7,3) circle [radius=.7pt]
(7.3,3) circle [radius=.7pt]
;

\filldraw[white]

(2,3) circle [radius=2.5pt]
(3,3) circle [radius=2.5pt]
(4,3) circle [radius=2.5pt]
(5,3) circle [radius=2.5pt]
(6,3) circle [radius=2.5pt]

(8,3) circle [radius=2.5pt]
(9,3) circle [radius=2.5pt]
(5.2,1) circle [radius=2.5pt]
;
\filldraw[black]
(0,3.3) circle [radius=1pt]
(0,3) circle [radius=1pt]
(0,2.7) circle [radius=1pt]
(6.7,3) circle [radius=1pt]
(7,3) circle [radius=1pt]
(7.3,3) circle [radius=1pt]
;

\node(0) at (-.5,3) {\Huge \color{cyan} $\Biggl\{$};
\node(1) at (-1.3,3) {\Large \color{cyan} $\ell$};
\node(2) at (6,3.6) {\Huge \color{cardinal} $\overbrace{\;\;\;\;\;\;\;\;\;\;\;\;\;\;\;\;\;\;}$};
\node(3) at (6,4.5) {\Large \color{cardinal} $k$};

\node(4) at (3.1,2.6) { \small \color{black} $x$};
\node(7) at (4,2.6) { \small \color{black} $y$};

\node(5) at (2.2,3.3) { $v$};

\node(6) at (5.4,.7){$z$};

\end{tikzpicture}

\caption{Illustration of the graph $H$ in Theorem \ref{thm:edge-contr-a}}
\label{fig:edge-contr}
\end{figure}

\section*{Acknowledgments}
The work of M.~Flagg is partially supported by NSF award 2331634.  The work of V.~Furst is partially supported by NSF award DMS-2331072.  The work of M.~Hunnell is partially supported by NSF award 2447261.  This project began at an American Institute of Mathematics (AIM) Workshop.  The authors are grateful to AIM and the NSF for their support.  We also thank Liam McAllister and Kana Takahashi for their careful reading and feedback.

\bibliographystyle{siam} 
\bibliography{Transmission.bib}

@article{edholm2012,
title = {Vertex and edge spread of zero forcing number, maximum nullity, and minimum rank of a graph},
journal = {Linear Algebra and its Applications},
volume = {436},
number = {12},
pages = {4352-4372},
year = {2012},
note = {Special Issue on Matrices Described by Patterns},
issn = {0024-3795},
doi = {https://doi.org/10.1016/j.laa.2010.10.015},
url = {https://www.sciencedirect.com/science/article/pii/S0024379510005392},
author = {Christina J. Edholm and Leslie Hogben and My Huynh and Joshua LaGrange and Darren D. Row}
}

@article{WU1979305,
title = {Cellular graph automata. I. basic concepts, graph property measurement, closure properties},
journal = {Information and Control},
volume = {42},
number = {3},
pages = {305-329},
year = {1979},
issn = {0019-9958},
doi = {https://doi.org/10.1016/S0019-9958(79)90288-2},
url = {https://www.sciencedirect.com/science/article/pii/S0019995879902882},
author = {Angela Wu and Azriel Rosenfeld}
}

@article{AMOS20151,
title = {Upper bounds on the k-forcing number of a graph},
journal = {Discrete Applied Mathematics},
volume = {181},
pages = {1-10},
year = {2015},
issn = {0166-218X},
doi = {https://doi.org/10.1016/j.dam.2014.08.029},
url = {https://www.sciencedirect.com/science/article/pii/S0166218X14003813},
author = {David Amos and Yair Caro and Randy Davila and Ryan Pepper}
}

@article{parameters2012,
author = {Barioli, Francesco and Barrett, Wayne and Fallat, Shaun M. and Hall, H. Tracy and Hogben, Leslie and Shader, Bryan and van den Driessche, P. and van der Holst, Hein},
title = {Parameters Related to Tree-Width, Zero Forcing, and Maximum Nullity of a Graph},
journal = {Journal of Graph Theory},
volume = {72},
number = {2},
pages = {146-177},
doi = {https://doi.org/10.1002/jgt.21637},
url = {https://onlinelibrary.wiley.com/doi/abs/10.1002/jgt.21637},
eprint = {https://onlinelibrary.wiley.com/doi/pdf/10.1002/jgt.21637},
year = {2013}
}

@phdthesis{owens,
	author = {K. Owens},
	school = {Brigham Young University - Provo},
	title = {Properties of the Zero Forcing Number},
	year = {2009}

}

@book {west1996intro,
    AUTHOR = {West, Douglas B.},
     TITLE = {Introduction to graph theory, second edition},
 PUBLISHER = {Prentice Hall, Inc., Upper Saddle River, NJ},
      YEAR = {2001},
     PAGES = {xvi+512},
      ISBN = {0-13-227828-6},
   MRCLASS = {05-01},
  MRNUMBER = {1367739},
}

@article{DREYER2009,
title = {Irreversible k-threshold processes: Graph-theoretical threshold models of the spread of disease and of opinion},
journal = {Discrete Applied Mathematics},
volume = {157},
number = {7},
pages = {1615-1627},
year = {2009},
issn = {0166-218X},
doi = {https://doi.org/10.1016/j.dam.2008.09.012},
url = {https://www.sciencedirect.com/science/article/pii/S0166218X08004393},
author = {Paul A. Dreyer and Fred S. Roberts}
}

@article{AIM2008,
title = {Zero forcing sets and the minimum rank of graphs},
journal = {Linear Algebra and its Applications},
volume = {428},
number = {7},
pages = {1628-1648},
year = {2008},
issn = {0024-3795},
doi = {https://doi.org/10.1016/j.laa.2007.10.009},
url = {https://www.sciencedirect.com/science/article/pii/S0024379507004624},
author = { {AIM Minimum Rank – Special Graphs Work Group}}
}

@article{BLESSING2015,
title = {On (t,r) broadcast domination numbers of grids},
journal = {Discrete Applied Mathematics},
volume = {187},
pages = {19-40},
year = {2015},
issn = {0166-218X},
doi = {https://doi.org/10.1016/j.dam.2015.02.005},
url = {https://www.sciencedirect.com/science/article/pii/S0166218X1500061X},
author = {David Blessing and Katie Johnson and Christie Mauretour and Erik Insko}
}

@book {domination1998,
    AUTHOR = {Haynes, Teresa W. and Hedetniemi, Stephen T. and Slater, Peter
              J.},
     TITLE = {Fundamentals of domination in graphs},
    SERIES = {Monographs and Textbooks in Pure and Applied Mathematics},
    VOLUME = {208},
 PUBLISHER = {Marcel Dekker, Inc., New York},
      YEAR = {1998},
     PAGES = {xii+446},
      ISBN = {0-8247-0033-3},
   MRCLASS = {05C69 (05-01)},
  MRNUMBER = {1605684},
MRREVIEWER = {Christina\ M.\ Mynhardt},
}

@article{quantum,
  title = {Full Control by Locally Induced Relaxation},
  author = {Burgarth, Daniel and Giovannetti, Vittorio},
  journal = {Phys. Rev. Lett.},
  volume = {99},
  issue = {10},
  pages = {100501},
  numpages = {4},
  year = {2007},
  month = {Sep},
  publisher = {American Physical Society},
  doi = {10.1103/PhysRevLett.99.100501},
  url = {https://link.aps.org/doi/10.1103/PhysRevLett.99.100501}
}

@article{Barioli2010ZeroFP,
  title={Zero forcing parameters and minimum rank problems},
  author={Francesco Barioli and Wayne W. Barrett and Shaun M. Fallat and H. Tracy Hall and Leslie Hogben and Bryan L. Shader and Pauline van den Driessche and Hein van der Holst},
  journal={Linear Algebra and its Applications},
  year={2010},
  volume={433},
  pages={401-411}
}

@article{article,
author = {Hogben, Leslie and Huynh, My and Kingsley, Nicole and Meyer, Sarah and Walker, Shanise and Young, Michael},
year = {2012},
month = {09},
pages = {1994–2005},
title = {Propagation time for zero forcing on a graph},
volume = {160},
journal = {Discrete Applied Mathematics},
doi = {10.1016/j.dam.2012.04.003}
}

@article{chilakamarri,
    author = {Chilakamarri, K. B. and Dean, N. and Kang, C. X. and Yi, E.},
    title = {Iteration index of a
          zero forcing set in a graph},
    journal = {Bulletin of the Institute of Combinatorics and Its
          Applications},
    year = {2012},
    volume = {64},
    pages = {57-72}
}
\end{document}